\renewcommand{\mathcal}{\mathscr}
\renewcommand{\epsilon}{\varepsilon}
\newcommand{\dimH}{\dim_H}
\newcommand{\EE}{\mathbb E}
\newcommand{\PP}{\mathbb P}
\newcommand{\N}{\mathbb N}
\newcommand{\ag}{\Gamma}
\DeclareMathOperator{\dist}{dist}
\newtheorem{theorem}{Theorem}[section]
\newtheorem{lemma}[theorem]{Lemma}
\newtheorem{proposition}[theorem]{Proposition}
\theoremstyle{definition}
\newtheorem{notation}[theorem]{Notation}
\theoremstyle{remark}
\newtheorem{remark}[theorem]{Remark}
\newtheorem*{localdim}{Proof of Lemma \ref{lem:localdim}}
\numberwithin{equation}{section}
\begin{document}

\title{Projections of random covering sets}
\author{Changhao Chen$^1$, Henna Koivusalo$^2$, Bing Li$^{3,1}$, Ville Suomala$^1$}
\address{$^1$Department of Mathematical Sciences, P.O. Box 3000, 90014
University of Oulu, Finland}
\address{$^2$Department of Mathematics,
University of York,
Heslington, York, YO10 5DD,
UK}
\address{$^3$Department of Mathematics, South China University of Technology,
Guangzhou, 510641, P.R. China}

\email{changhao.chen@oulu.fi, henna.koivusalo@york.ac.uk}
\email{scbingli@scut.edu.cn, ville.suomala@oulu.fi}

\date{\today}

\begin{abstract}
We show that, almost surely, the Hausdorff dimension $s_0$ of a random covering set is preserved under all orthogonal projections to linear subspaces with dimension $k>s_0$. The result holds for random covering sets with a generating sequence of ball-like sets, and is obtained by investigating orthogonal projections of a class of random Cantor sets.
\end{abstract}
\maketitle
\noindent{\small{\bf AMS Subject Classification (2010)}:\ 60D05,
28A78, 28A80.}

\section{Introduction}

We begin by giving a definition for random covering sets. Given a sequence of independent random variables $(\xi_n)$, uniformly distributed on the torus $\mathbb T^d$, and a sequence of subsets of the torus, $(g_n)$, random covering set $E$ is the set of infinitely often covered points,
\[
E=\limsup _{n\to\infty}(\xi_n + g_n)=\bigcap_{k=1}^\infty \bigcup_{n=k}^\infty(\xi_n + g_n).
\]
Here we interpret $\xi_n + g_n\subset \mathbb T^d$. For further
details and background on random covering sets, see e.g. the survey \cite{Kahane00}. Here we only mention a few key observations.

It is an immediate consequence of Borel-Cantelli lemma and Fubini's theorem that
\[
\mathcal{L}(E)=
\begin{cases}
0,\textrm{ when }\sum_{n=1}^\infty \mathcal{L}(g_n)<\infty\\
1, \textrm{ when }\sum_{n=1}^\infty \mathcal{L}(g_n)=\infty
\end{cases},
\]
where $\mathcal{L}$ is the Lebesgue measure.
In the earlier research on the random covering sets, the main emphasis has been on the full-measure case in the circle $\mathbb T^1$. Of particular interest have been variants of a problem posed by Dvoretzky in 1956 asking what kind of conditions on $g_n$ guarantee that
\[
\mathbb P(E=\mathbb T^1)=1\,.
\]
This particular problem was fully solved by Shepp in 1972 \cite{Shepp72}, but many related problems especially in higher dimensions are still open.

Recently, also the zero measure case has drawn a lot of attention among the researchers. The Hausdorff dimension of the random covering sets in $\mathbb T^1$ when $g_n$ are intervals of length $1/n^\alpha$ was first calculated by Fan and Wu \cite{FanWu04}. Durand \cite{Durand10} studied the Hausdorff measure and large intersection properties and reproved the dimension result in $\mathbb T^1$. For certain box-like random covering sets in $\mathbb{T}^d$, the dimension was obtained by of J\"arvenp\"a\"a, J\"arvenp\"a\"a, Koivusalo, Li and Suomala \cite{JJKLS}.
Recently, Persson \cite{Persson} generalised the result using the large intersection method.

In this article we study the dimensions of orthogonal projections of the random covering sets (after embedding the torus $\mathbb{T}^d$ into $\mathbb{R}^d$ in a natural way). Our motivation stems from the classical results of projections of general sets by Marstrand, Kauffman, Mattila and others. In particular, if the Hausdorff dimension of a Borel set $A\subset\mathbb{R}^d$, $\dimH A=s$, then it is well known that for almost all $k$-dimensional subspaces $V\subset\mathbb{R}^d$ the orthogonal projection of $A$ into $V$, $\pi_V A$ is of Hausdorff dimension $\min\{s,k\}$. Below, we refer to this fact as the \emph{projection theorem}. For background and references, see e.g. \cite{Mattila95}. One should bear in mind that since this is an almost all type statement, for most concrete sets there are plenty of exceptional directions for which the dimension is less than the expected value. Moreover, if one fixes a direction, the projection theorem cannot be used to obtain information about the dimension of the projection in this direction.

However, for suitable random families of sets, the situation is different. A model example is the fractal percolation for which Falconer \cite{Falconer89} and Falconer and Grimmett \cite{FalconerGrimmett92} showed that for the principal directions the dimension of the projection of the limit set is almost surely equal to the expected value.
It is harder to show, and this was done only recently by Rams and Simon (for $d=2$) \cite{RamsSimonb}, \cite{RamsSimona}, that the same remains true for all directions simultaneously. That is, if $s$ is the almost sure dimension of the fractal percolation limit set $F\subset\mathbb{R}^2$, then almost surely, $\dimH \pi_V (F)=\min\{s,1\}$ for all lines $\ell\subset\mathbb{R}^2$.

Our approach is motivated by Shmerkin and Suomala \cite{ShmerkinSuomalaa} where a variant of the fractal percolation model is used in order to study the dimension of non-tube null sets. Some ideas have also been adapted from the forthcoming work \cite{ShmerkinSuomalab}, where Shmerkin and Suomala prove various results for the intersections and projections of families of random martingale measures (for which fractal percolation is a central example).

In our main result, Theorem \ref{randomcoveringproj}, we establish that for the random covering sets, there are almost surely no exceptional directions for the projection theorem. This, on the other hand, is a corollary of a similar result for a general class of random Cantor sets, Theorem \ref{main}. The main difference between our model and the (variations of) fractal percolation in \cite{RamsSimona}, \cite{RamsSimonb}, \cite{ShmerkinSuomalaa}, \cite{ShmerkinSuomalab} is the fact that we do not impose any bounds on how fast the diameter of the basic sets (or construction cubes) goes to zero. This is essential for the application to the random covering sets and also the main reason why we cannot derive the result directly from \cite{ShmerkinSuomalab}.

The paper is organised as follows. The random Cantor sets are introduced in Section \ref{sect:preliminaries} together with the required notation and definitions. The main result for these Cantor sets is also presented as Theorem \ref{main}. In Section \ref{sec:geom} we present some geometric lemmata, which have been adapted from \cite{ShmerkinSuomalaa} to the present setting. Section \ref{sect:probability} contains the main probabilistic argument, and concludes the proof of Theorem \ref{main}. The dimension result for the projections of the random covering sets is then derived as a corollary in Section \ref{sect:application}, see Theorem \ref{randomcoveringproj}. Throughout Sections \ref{sect:preliminaries}--\ref{sect:application} we only consider the case $d=2$. Higher dimensional generalisations are discussed at the end in Section \ref{sect:general}.

\section{Random Cantor sets and their projections}\label{sect:preliminaries}
In this section, we define the random Cantor type sets that we consider and state the main results for the dimension of their orthogonal projections.

Given $k\in\N$, we call the closed squares $[\tfrac{i}{k},\tfrac{i+1}{k}]\times[\tfrac{j}{k},\tfrac{j+1}{k}]$, $0\le i,j\le k-1$, \emph{$k$-adic}. In a similar way, we can consider the \emph{$k$-adic grid} of any $Q=[x,x+\lambda]\times[y,y+\lambda]\subset[0,1]^2$ consisting of the $k^2$ closed subsquares of $Q$ of side-length $\lambda/k$.

Given $M\in \mathbb N$ and $0<s<1$, we consider the following random model. Decompose the unit square into $M$-adic subsquares and randomly choose $N\leq M^2$ of these subsquares $Q_1,\ldots, Q_N$, all choices being independent and uniformly distributed (i.e. $\mathbb{P}(Q_i=Q)=M^{-2}$ for each $Q$ in the grid and each $1\le i\le N$). We stress that the chosen subsquares need not be disjoint, that is, the same square is allowed to be chosen multiple times.

Let $(M_n)\subset \mathbb N$ and $(N_n)\subset \mathbb N$ be sequences of integers with $N_n\leq M_n^2$ ($M_n\ge 2$) for all $n$ and
\begin{equation}\label{eq:def of s}
s=\liminf _{n\to\infty} \frac{\sum_{i=1}^n\log N_i}{\sum_{i=1}^n\log M_i}<1.
\end{equation}
Let
\begin{align}\label{r_n}
r_n&=\left(\prod_{k=1}^n M_k\right)^{-1}\,,\\
\label{P_n}
 P_n&=\prod_{k=1}^n N_k\,.
\end{align}

We consider a random construction obtained by iterating the above construction with $M=M_n$ and $N=N_n\le M_n^{2}$. We first choose randomly $\mathcal{F}_1=(Q_{1,1},\ldots,Q_{1,N_1})$ among the squares in the $M_1$-adic grid of the unit square as in above. Suppose that for $n\in\mathbb N$, $\mathcal{F}_n=(Q_{n,1},\ldots,Q_{n,P_n})$ are chosen $r_{n}^{-1}$-adic squares.    Independently inside each of these $Q_{n,j}$, we consider the $M_{n+1}$-adic grid and perform the above construction with $M=M_{n+1}$ to obtain $N_{n+1}$ chosen subsquares of each $Q_{n,j}$. Let $\mathcal{F}_{n+1}$ consist of all the chosen subsquares inside the $Q_{n,j}\in\mathcal{F}_n$. Again, it should be pointed out that the elements of $\mathcal{F}_{n+1}$ do not have to be disjoint. For this reason we use the notation $(Q_{n,1},\ldots)$ rather than $\{Q_{n,1},\ldots\}$. For convenience, we however stick to notations like $Q\in\mathcal{F}_n$ instead of the more rigorous $Q=Q_{n,i}$, $i\in\{1,\ldots, P_n\}$.

Denote by $\omega$ the elements in the probability space $\Omega$ induced by the construction described above. Let $F=F(\omega)$ be the random limit set
\[
F=\bigcap_{n=1}^\infty \bigcup_{Q\in\mathcal F_n} Q.
\]
For each line $\ell$ in the plane, denote by $\pi_\ell$ the orthogonal projection to $\ell$ and by $\dimH (F)$ the Hausdorff dimension of a set $F$. Our main result for the random Cantor sets is the following.

\begin{theorem}\label{main}
Almost surely for all lines $\ell$, $\dimH \pi_\ell(F)= s$.
\end{theorem}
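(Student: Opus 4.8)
The plan is to establish the two inequalities $\dimH\pi_\ell(F)\le s$ and $\dimH\pi_\ell(F)\ge s$ separately, the first being deterministic and easy and the second carrying all the difficulty. For the upper bound, observe that $\{Q:Q\in\mathcal F_n\}$ always covers $F$ by at most $P_n$ squares of diameter $\sqrt2\,r_n$, and since $\pi_\ell$ does not increase diameters, the projected family covers $\pi_\ell(F)$. Fixing $t>s$ and choosing $t'\in(s,t)$, along a subsequence $n_j$ realising the liminf in \eqref{eq:def of s} we get $P_{n_j}\le r_{n_j}^{-t'}$ for large $j$, whence $P_{n_j}(\sqrt2\,r_{n_j})^{t}\to0$. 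Thus $\mathcal H^{t}(\pi_\ell F)=0$ for every $t>s$ and every $\ell$ simultaneously, so $\dimH\pi_\ell(F)\le s$ surely; as $s<1$ this is the full bound. For the lower bound I would introduce the natural branching measure $\mu=\mu(\omega)$ on $F$, assigning mass $P_n^{-1}$ to each of the $P_n$ level-$n$ tree nodes and passing to the weak limit (masses of coinciding squares being added), so that $\EE\mu=\mathcal L$. Writing $\mu_\ell=\pi_\ell\mu$, it then suffices to show that, almost surely, for every line $\ell$ and every $t<s$ there is a finite $C$ with $\mu_\ell(I)\le C|I|^{t}$ for all intervals $I$: the mass distribution principle applied to $\mu_\ell$ on $\pi_\ell(F)$ yields $\dimH\pi_\ell(F)\ge t$, and letting $t\uparrow s$ along a countable sequence finishes the proof.

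The uniformity over all $\ell$ is exactly why the classical almost-every-direction energy method is insufficient, and here the geometric lemmata of Section~\ref{sec:geom} enter. Parametrising lines by their direction $\theta\in[0,\pi)$, these should show that the strips $\pi_{\ell_\theta}^{-1}(I)$ and $\pi_{\ell_{\theta'}}^{-1}(I)$ are comparable, up to a bounded enlargement of $I$, whenever $|\theta-\theta'|\lesssim|I|$. Consequently it is enough to control $\mu_{\ell_\theta}(I)$ for $\theta$ in an $r_n$-net of directions. Since there are $\approx r_n^{-1}$ such directions and $\approx r_n^{-1}$ relevant intervals of a given length, at each level only $\lesssim r_n^{-2}$ (direction, interval) pairs need be examined, while $r_n\le2^{-n}$ because $M_k\ge2$.

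The crux is then a single large deviation estimate, to be carried out in Section~\ref{sect:probability}: for a fixed strip $S=\pi_{\ell_\theta}^{-1}(I)$ one must bound $\PP(\mu(S)>|I|^{t})$ by a quantity $\varepsilon_n$ small enough that $\sum_n r_n^{-2}\varepsilon_n<\infty$. As $\EE\mu(S)=\mathcal L(S)\approx|I|$, this asks for the probability that $\mu(S)$ exceeds its mean by the large factor $|I|^{t-1}$, which I would obtain by estimating a high moment $\EE[\mu(S)^{p}]$ and applying Markov's inequality. The moment estimate is feasible because distinct branches of the construction tree are independent until they separate, so $\EE[\mu(S)^{p}]$ can be expanded as a sum over $p$-tuples of branches organised by their splitting levels, each level contributing a factor governed by the number of chosen squares the strip meets — precisely the geometric input of Section~\ref{sec:geom}. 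Borel--Cantelli over $n$, combined with the net-to-all-directions and scale-comparison lemmata, then upgrades these single-strip estimates to the uniform Frostman bound required above.

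I expect the genuine obstacle to lie in making the moment estimate uniform across \emph{all} interval lengths, in particular lengths $\rho\in[r_{n+1},r_n)$ strictly between two construction scales. Because no lower bound is imposed on $M_{n}$, the ratio $r_n/r_{n+1}$ may be arbitrarily large, so a Frostman bound known only at the scales $r_n$ does not interpolate to intermediate scales, and $\mu_\ell(I)\le C|I|^{t}$ must instead be proved for strips of every width directly. Handling these intermediate widths — resolving a strip first into the level-$n$ squares it meets and then into the finer configuration inside each chosen square — is the technically delicate step, and the main place where the absence of bounds on the contraction ratios must be confronted.
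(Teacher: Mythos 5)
Your skeleton agrees with the paper's: the upper bound is the easy covering argument, and the lower bound is reduced to a Frostman-type estimate $(\pi_\ell)_*\mu(B(x,r))\le Cr^t$ for the natural cascade measure $\mu$, made uniform over all lines by finite nets of lines and strips, with special attention to strip widths lying strictly between consecutive scales $r_{n+1}$ and $r_n$ (you correctly identify this, a consequence of the unboundedness of $(M_n)$, as the crux). The genuine gap is in your probabilistic architecture. You plan to bound the \emph{unconditional} tail $\PP\bigl(\mu(S)>w(S)^t\bigr)$ for each strip in a net, make these bounds summable over all scales, and invoke Borel--Cantelli to obtain the uniform Frostman bound almost surely. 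The paper never proves, and never needs, such unconditional bounds: its key estimates (Lemma \ref{lem:inside level} and Proposition \ref{prop:strips} for strip counts, Lemma \ref{lemma:probab_estimate} and Proposition \ref{prop:lines} for lines) are all \emph{conditional} on $\mathcal F_n$ and are only useful on the good event $\bigcap_{k\le n}\ag_k$, because the exponent in each Markov bound contains the level-$n$ quantities $Z(S,n)$ or $|\ell\cap\mathcal F_n|$, which are controlled only on that event. The conclusion of that scheme is only $\PP(\bigcap_n\ag_n)>0$; that is, Theorem \ref{lem:localdim} gives the uniform Frostman bound with \emph{positive} probability, not almost surely. The almost-sure statement of Theorem \ref{main} then comes from a step entirely absent from your proposal: Kolmogorov's zero-one law applied to the event ``$\dimH\pi_\ell(F)\ge t$ for all $\ell$'', which is a tail event (the Frostman bound itself is not a tail event, so the law must be applied to the dimension statement, not to the measure estimate).

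This is not a cosmetic difference, because the unconditional tail of the \emph{limit} measure is exactly where the difficulty sits. The event $\{\mu(S)>w(S)^t\}$ receives contributions from anomalous configurations at every coarser level above $S$ (an excess of level-$k$ squares near the line of $S$ propagates multiplicatively to finer levels), so any unconditional bound --- for instance via your proposed moment estimate $\EE[\mu(S)^p]$ --- must control all of these scenarios simultaneously, with constants uniform in the unbounded sequence $(M_n)$. Your sketch does not engage with this: the claim that the branch-expansion of $\EE[\mu(S)^p]$ ``is feasible'' is precisely the open point, and it is delicate here because at sparse levels (when $N_{n+1}\ll M_{n+1}$) strip counts are typically zero with rare spikes, so moments do not scale like powers of means, and because the resulting bound must beat the strip entropy, which is of order $r_{n+1}^{-3}$ (Lemma \ref{lem:finite_approx_family} applied with $M\approx 5r_{n+1}^{-1}$) rather than the $r_n^{-2}$ you estimate --- your own final paragraph explains why all widths in $[r_{n+1},r_n)$ must be treated separately. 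The paper's device --- exponential moments of level-to-level counts, $\EE(e^{Z(S,n+1)}\mid\mathcal F_n)$ and $\EE(e^{\lambda|\ell\cap\mathcal F_{n+1}|}\mid\mathcal F_n)$, evaluated only on the previous good event --- yields conditional tails of size $\exp(-cr_{n+1}^{-\epsilon})$ uniformly in $M_{n+1}$, and this, combined with the positive-probability-plus-zero-one-law structure, is what actually overcomes the obstacle you flag. Your upper-bound argument, by contrast, is correct and matches the paper's.
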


\begin{remark}
It is an immediate consequence of the definitions that we have the dimension upper bound $\dimH(F)\le s$ for all $F$. It is more or less standard to show that almost surely, also $\dimH(F)\ge s$ (and these estimates remain true also for $1\le s\le 2$). We do not give the proof here, since in the case $s<1$ that is relevant for us, Theorem \ref{main} yields a much stronger result.

When the sequence $(M_n)$ is bounded, Theorem \ref{main} could be obtained by adapting the argument of \cite{ShmerkinSuomalaa}. It would also follow directly from the general results for random martingale measures in \cite{ShmerkinSuomalab}. Whence the main content in Theorem \ref{main} is that we do not impose any bounds on the growth of the numbers $M_n$. We also remark that if $M_n$ is bounded, then the packing dimension of $F$ almost surely equals $s'=\limsup_{n\to\infty} \frac{\sum_{i=1}^n\log N_i}{\sum_{i=1}^n\log M_i}$, whereas in the unbounded case the packing dimension can be anything in between $s'$ and $2$ depending on the growth speed of $M_n$.
\end{remark}

Let
\begin{equation}\label{c_n}
\mu_n=\sum_{Q\in\mathcal{F}_n}c_n\mathcal{L}|_{Q},\ \ \text{where}\ \
c_n=\prod_{i=1}^n M_{i}^{2}N_i^{-1}=r_n^{-2}P_n^{-1}
\end{equation}
 and $\mathcal{L}|_Q$ is the Lebesgue measure on $Q$. Then $\mu_n$ is a probability measure for each $n$ and since the measure of any $r_n^{-1}$-adic square remains unchanged after $n$ steps, it follows that the sequence $(\mu_n)$ converges in weak$^*$ topology to a random probability measure $\mu$ on $[0,1]^2$.

In the following, tubular neighbourhoods of lines in the unit cube are called strips. More precisely, a strip $S$ of width $w(S)=\delta>0$, defined by a line $\ell$, is the set
\[
S=\{x\in [0,1]^2\mid\dist(x,\ell)<\delta/2\}
\]
where $\dist$ is the Euclidean distance.

 If a line $\ell$ is given, we denote by $B(x,r)$ the ball of radius $r$ and center $x$ on the line $\ell$. For a line $\ell$ we use the notation
\[
|\ell\cap\mathcal F_n|:=\sum_{Q\in\mathcal F_n}\mathcal H^1(\ell\cap Q),
\]
where $\mathcal H^1$ is the $1$-dimensional Hausdorff measure. Since the same square can occur many times in $\mathcal{F}_n$, this is in general different than the length of $\ell\cap\bigcup_{Q\in\mathcal{F}_n}Q$. For a strip $S$, denote
\[
Z(S,n)=\# \{Q\in\mathcal F_n\mid Q\cap S\neq \emptyset\},
\]
where $\# J$ denotes the cardinality of a set $J$.  Denote the indicator function of an event $A$ by $\chi_A$ and the complement of $A$ by $A^c$. Finally, denote by $f_*\mu$ the image measure of $\mu$ under a mapping $f$.

Theorem \ref{main} is easily deduced from the following estimate for the projections of the limit measure $\mu$, which is proved in Sections \ref{sec:geom}--\ref{sect:probability}.

\begin{theorem}\label{lem:localdim}
Let $t<s$. There is a constant $C_0<+\infty$ such that there is a positive probability for the event
\[
(\pi_\ell)_*\mu(B(x,r))\leq C_0r^t\text{ for all lines }\ell,\text{ for all }x\in\ell\text{ and for all }r>0\,.
\]
\end{theorem}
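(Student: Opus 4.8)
The plan is to read the left-hand side as the $\mu$-measure of a strip and then to bound $\mu$ of all strips simultaneously. If $\ell$ is a line and $x\in\ell$, then $\pi_\ell^{-1}(B(x,r))$ is exactly a strip $S$ of width $w(S)=2r$ whose central line is orthogonal to $\ell$, and $(\pi_\ell)_*\mu(B(x,r))=\mu(S)$. Thus the assertion is equivalent to: with positive probability, $\mu(S)\le C_0\,w(S)^t$ for every strip $S$. Given a strip of width $w$, I would fix the scale $n$ with $r_n\le w<r_{n-1}$ and reduce to that scale. Since $\mu$ is carried by $\bigcup_{Q\in\mathcal F_n}Q$ and each $Q\in\mathcal F_n$ carries mass $\mu(Q)=P_n^{-1}$, one has the crude bound $\mu(S)\le Z(S,n)P_n^{-1}$; but for the expectation to behave well I would instead use the measure-theoretic identity
\[
\mu_n(S)=c_n\int |\ell_u\cap\mathcal F_n|\,du,
\]
where $\ell_u$ runs over the lines parallel to the central line of $S$ and the integral is over the transverse width of $S$. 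Applying this to the fattening $S^+$ of $S$ by $\sqrt2\,r_n$ (which captures the limit measure inside the level-$n$ squares meeting $S$), the problem reduces to a uniform upper bound for the linear quantities $|\ell\cap\mathcal F_n|$.

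The reason for working with $|\ell\cap\mathcal F_n|$ rather than the count $Z(S,n)$ is that its conditional expectation telescopes with no geometric loss: since each grid subsquare is retained with probability $N_n/M_n^2$ and the length $\mathcal H^1(\ell\cap\,\cdot\,)$ is additive over subsquares,
\[
\EE\big[\,|\ell\cap\mathcal F_n|\ \big|\ \mathcal F_{n-1}\,\big]=\frac{N_n}{M_n^{2}}\,|\ell\cap\mathcal F_{n-1}|,
\]
whence $\EE\,|\ell\cap\mathcal F_n|\le\sqrt2\,P_n r_n^{2}$. I want to stress that the analogous recursion for $Z(S,n)$ carries a factor up to $2$ per level (a line meets up to $2M_n$ of the $M_n^2$ subsquares of a parent), producing a fatal $2^n$ when the $M_n$ are bounded; the length formulation avoids this entirely, and this is exactly what lets the argument survive without any hypothesis on the growth of $M_n$. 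After unwinding the reduction of the first paragraph with $w\approx r_n$, the target becomes $|\ell\cap\mathcal F_n|\le C_0' P_n r_n^{1+t}$, which exceeds the mean by the large factor $r_n^{-(1-t)}\ge1$. It is this slack, coming from $t<s<1$, that leaves room to beat a union bound.

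The core of the proof is a concentration estimate controlling the upper tail of $|\ell\cap\mathcal F_n|$. Conditionally on $\mathcal F_{n-1}$ the contributions of distinct $Q'\in\mathcal F_{n-1}$ are independent, so I would set up an exponential-moment (Chernoff--Bernstein) bound level by level and multiply, aiming at an estimate of the form $\PP(|\ell\cap\mathcal F_n|>C_0'P_n r_n^{1+t})\le r_n^{D}$, with $D$ as large as desired once $C_0'$ is large. Because the threshold sits a polynomial factor above the mean, such a tail bound is plausible; the delicate point, and what I expect to be the main obstacle, is that no lower bound is assumed on the speed at which scales shrink, so a single level may multiply $M_n$ by an arbitrarily large amount while $P_{n-1}$ is still small. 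At such a step there are few independent contributions and concentration is weak, and moreover one must ensure the bound passes to all the intermediate scales opened up by a large jump in $M_n$. Handling this uniformly — presumably by selecting a suitable subsequence of good scales and propagating the exponential bound across the gaps — is the heart of the probabilistic argument.

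Finally I would run a union bound over a net of strips and transfer to all strips. At scale $r_n$ there is a net of at most $r_n^{-O(1)}$ strips, obtained by discretising both the direction and the transverse position at spacing comparable to $r_n$, with the property (furnished by the geometric lemmata of Section~\ref{sec:geom}) that every strip of width $\approx r_n$ is comparable, up to a bounded factor in $\mu$-measure, to a net strip. Since the per-line failure probability $r_n^{D}$ with $D$ large dominates the net cardinality $r_n^{-O(1)}$ and is summable in $n$, the total probability that the desired bound fails at some scale $n\ge n_0$ can be made strictly less than $1$ by choosing $n_0$ and $C_0$ large; the finitely many coarse scales, equivalently strips of width $\gtrsim r_{n_0}$, are absorbed by the trivial bound $\mu(S)\le1\le C_0 w(S)^t$ after enlarging $C_0$. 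This produces a positive-probability event on which $\mu(S)\le C_0 w(S)^t$ for all strips, hence the stated Frostman bound for $(\pi_\ell)_*\mu$ for all $\ell$, all $x\in\ell$ and all $r>0$. The conclusion is only ``positive probability'' precisely because the union over all scales, including the coarse ones, cannot be driven to probability $1$ with a single constant $C_0$; this is nevertheless enough, since Theorem~\ref{main} is then extracted by a separate zero--one argument.
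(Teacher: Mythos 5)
Your reduction of the theorem to uniform bounds on the line quantities $|\ell\cap\mathcal F_n|$ contains a genuine gap, and it sits exactly where the difficulty of the theorem lies. Fix a strip $S$ of width $w$ and the scale $n$ with $r_n\le w<r_{n-1}$, as you propose. Even granting a perfect uniform bound $|\ell\cap\mathcal F_n|\le C_0'P_nr_n^{1+t}$ for all $\ell$ and all $n$, your Fubini identity applied to the fattened strip gives
\[
\mu(S)\le\mu_n(S^{+})\le c_n\,\bigl(w+2\sqrt2\,r_n\bigr)\,C_0'P_nr_n^{1+t}\le C\,w\,r_n^{t-1}=C\,w^{t}\,(w/r_n)^{1-t},
\]
using $c_nP_n=r_n^{-2}$. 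The factor $(w/r_n)^{1-t}$ is bounded only when $w$ is comparable to $r_n$ --- and indeed you yourself unwind the reduction only ``with $w\approx r_n$''. Since no growth bound on $(M_n)$ is assumed, there are widths with $r_n\ll w\ll r_{n-1}$ comparable to no scale $r_m$ at all; for these the factor can be as large as $M_n^{1-t}$, working at the coarser level instead loses $(r_{n-1}/w)^{t}\le M_n^{t}$, and combining line bounds at both adjacent levels with the nesting $Z(S,n)\le N_nZ(S,n-1)$ still loses a positive power of $M_n$. So uniform line bounds do not imply the strip bound $\mu(S)\le C_0w^{t}$ precisely in the unbounded-$M_n$ case, which is the only case not already accessible by earlier methods. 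This is why the paper's proof carries \emph{two} families of events (Notation \ref{events}): the line events $G_n$, and in addition the strip-count events $A_n$, asserting $Z(S,n)\le 500\cdot5^{t}RP_nw(S)^{t}$ for all strips of the intermediate widths $\tfrac15r_n<w(S)\le\tfrac15r_{n-1}$. These counts receive their own exponential-moment concentration estimate (Lemma \ref{lem:inside level}), whose input is the level-$n$ line bound converted into a count bound by Lemma \ref{lem:lengthtonum}, together with a union bound over the finite net of strips of Lemma \ref{lem:finite_approx_family} (Proposition \ref{prop:strips}); and in the final step it is $A_{n+1}$, not any line event, that delivers $(\pi_\ell)_*\mu(B(x,r))\le C_0r^{t}$. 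Your proposal has no counterpart of this half of the argument: your union bound runs over lines only, and your paragraph on ``intermediate scales'' misdiagnoses the problem as weak concentration of $|\ell\cap\mathcal F_n|$, which it is not.

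Two lesser points. First, your stated reason for avoiding the counts $Z(S,n)$ --- an alleged factor of $2$ per level producing ``a fatal $2^n$'' --- is unfounded: in Lemma \ref{lem:inside level} the conditional mean of $Z(S,n+1)$ is of order $w(S)r_n^{-1}N_{n+1}Z(S,n)$, which on the good event lies a fixed constant factor below the threshold $500RP_{n+1}w(S)^{t}$ because $(w(S)/r_n)^{1-t}\le 1$; no exponential loss occurs. Second, the concentration step you defer (``presumably by selecting a suitable subsequence of good scales\dots'') is not in fact endangered by large jumps of $M_{n+1}$: each summand of $|\ell\cap\mathcal F_{n+1}|$ is at most $\sqrt2\,r_{n+1}$, so Markov's inequality applied to $\exp(\lambda|\ell\cap\mathcal F_{n+1}|)$ with $\lambda=r_{n+1}^{-1+\epsilon}$ gives conditional failure probability $\exp(-C_2r_{n+1}^{-\epsilon})$ (Lemma \ref{lemma:probab_estimate}, Proposition \ref{prop:lines}), which beats the net cardinality $Cr_{n+1}^{-4}$ of Lemma \ref{lem:finitenumblines} however fast $r_{n+1}$ shrinks. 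What is genuinely needed there is not a subsequence of good scales but the inductive conditional structure: assume $\bigcap_{k\le n}\Gamma_k$, estimate $\PP(\Gamma_{n+1}^{c}\mid\mathcal F_n)$, multiply the conditional probabilities, and absorb the finitely many initial levels by taking the constant $R$ large --- which is also the true source of the ``positive probability'' form of the statement.
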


\begin{proof}[Proof of Theorem \ref{main} assuming Theorem \ref{lem:localdim}]
Obviously $\dimH \pi_\ell(E)\le \dimH(E)\le s$ for all lines, so we concentrate on the lower bound.

For all $t<s$, by Lemma \ref{lem:localdim}  the estimate
\[
(\pi_\ell)_*\mu(B(x,r))\leq C_0r^t,
\]
holds with positive probability for all $\ell$, $x$ and $r$, and thus with positive probability a lower bound $\dimH \pi_\ell(F)\geq t$ holds for all lines $\ell$ (See e.g. \cite[4.2]{Falconer03}). Since the event ``$\dimH \pi_\ell(F)\geq t$ for all lines'' is a tail event, the Kolmogorov zero-one law implies that this lower bound holds almost surely. Approaching $s$ along a sequence gives, almost surely for all lines $\ell$, the lower bound $\dimH \pi_\ell(F)\geq s$.
\end{proof}

\section{Geometric Lemmata}\label{sec:geom}

In this section, we present some simple geometric observations. The following lemma is an adaptation of \cite[Lemma 3.3]{ShmerkinSuomalaa} to our setting. We do not repeat the proof here.

\begin{lemma}\label{lem:finitenumblines}
There is a collection of lines, $\mathcal A_n$, such that for any line $\ell$ there is $\tilde{\ell}\in\mathcal A_n$ with
\[
|\ell\cap \mathcal F_n|\leq |\tilde{\ell}\cap \mathcal F_n|+ r_n
\]
and $\mathcal A_n$ has at most $C r_{n}^{-4}$
elements, where $C<+\infty$ is a constant.
\end{lemma}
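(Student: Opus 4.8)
The plan is to discretise the two-parameter family of lines meeting $[0,1]^2$ and to show that the functional $\ell\mapsto|\ell\cap\mathcal F_n|$ changes by at most $r_n$ under this discretisation. I would parametrise a line by its direction angle $\theta\in[0,\pi)$ and its signed distance $p$ from the origin, writing $\ell_{\theta,p}=\{z\in\mathbb R^2:z\cdot(-\sin\theta,\cos\theta)=p\}$. Only lines with $\ell_{\theta,p}\cap[0,1]^2\neq\emptyset$ are relevant, so $p$ may be restricted to a bounded interval $I$ of length $O(1)$. Let $\mathcal A_n$ consist of all $\ell_{\theta,p}$ with $\theta\in r_n^2\mathbb Z\cap[0,\pi)$ and $p\in r_n^2\mathbb Z\cap I$. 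Since each parameter ranges over an interval of length $O(1)$ sampled at spacing $r_n^2$, this gives $\#\mathcal A_n\le C r_n^{-4}$, the required cardinality bound.

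It then suffices to prove the one-sided continuity estimate: for every line $\ell=\ell_{\theta,p}$ there is a net line $\tilde\ell=\ell_{\tilde\theta,\tilde p}\in\mathcal A_n$ with $|\theta-\tilde\theta|\le r_n^2$, $|p-\tilde p|\le r_n^2$ and $|\ell\cap\mathcal F_n|\le|\tilde\ell\cap\mathcal F_n|+r_n$. The geometric facts I would use are that every $Q\in\mathcal F_n$ is a square of side $r_n$, so $\mathcal H^1(\ell\cap Q)\le\sqrt2\,r_n$, and that, for a fixed direction, the chord length $p\mapsto\mathcal H^1(\ell_{\theta,p}\cap Q)$ is piecewise linear, vanishes outside a $p$-window of length $r_n(|\cos\theta|+|\sin\theta|)$, and is \emph{constant} except on two ``corner windows'' near the ends of this range, where the line exits through adjacent rather than opposite sides. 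Writing $|\ell\cap\mathcal F_n|=\sum_{Q\in\mathcal F_n}\mathcal H^1(\ell\cap Q)$ and comparing term by term, only the squares for which $\ell$ lies within an $O(r_n^2)$ neighbourhood of a corner window contribute to the difference. To secure the favourable \emph{sign}, one chooses $\tilde\ell$ among the finitely many nearest net lines so as to maximise $|\,\cdot\,\cap\mathcal F_n|$; equivalently, one rounds $p$ in the direction that does not decrease the total chord length.

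The crux, and the place where the estimate is genuinely delicate, is the aggregation of the per-square differences: the same $r_n^{-1}$-adic square may occur in $\mathcal F_n$ with large multiplicity, and the number of squares met by a single line is not bounded independently of $n$, so a naive term-by-term Lipschitz bound summed over all $P_n$ squares is far too lossy. The resolution is that after a perturbation of size $r_n^2$ the chord length is unchanged for every square whose corner windows the line avoids, so the total change is controlled by the squares the line meets \emph{near a grid vertex}, whose aggregate contribution is $O(r_n)$ since each square has side $r_n$ and the relevant windows have $p$-length $O(r_n^2)$ at this scale. The nearly axis-parallel directions, where $\min(|\cos\theta|,|\sin\theta|)$ is comparable to $r_n$ and chord lengths vary most rapidly, would have to be handled separately by a direct estimate, as there the corner windows are no longer negligible. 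Carrying out this accounting yields the bound $r_n$ and reproduces the adaptation of \cite[Lemma 3.3]{ShmerkinSuomalaa}; I expect this aggregation step, rather than the counting of net lines, to be the main technical obstacle.
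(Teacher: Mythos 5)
Your proposal has a genuine gap, and it sits exactly where you predicted trouble but did not resolve it: the interaction between the discretisation and the multiplicities. Recall that $|\ell\cap\mathcal F_n|=\sum_{Q}m_Q\,\mathcal H^1(\ell\cap Q)$, where a single $r_n^{-1}$-adic square may occur in $\mathcal F_n$ with multiplicity $m_Q$ as large as $P_n=\prod_{i\le n}N_i$; since the standing hypotheses only require $N_i\le M_i^{2}$ and a \emph{liminf} condition on $\sum\log N_i/\sum\log M_i$, for a given $n$ one can have $P_n$ as large as $r_n^{-2}$, and the lemma (which is used conditionally on arbitrary realisations of $\mathcal F_n$) must hold for every such configuration. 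Now test your net against the configuration in which all $P_n$ elements of $\mathcal F_n$ are one fixed square $Q_0$, and let $\ell$ be the diagonal of $Q_0$. The conclusion forces some $\tilde\ell\in\mathcal A_n$ to satisfy $\mathcal H^1(\tilde\ell\cap Q_0)\ge \sqrt2\,r_n-r_n/P_n=\sqrt2\,r_n-r_n^{3}$. Near the diagonal direction the chord-versus-offset profile of $Q_0$ is a tent with slope $2$ (there is no flat top), so this requires a net line whose offset is within $O(r_n^{3})$ of that of the diagonal of $Q_0$. A grid-independent net sampled at spacing $r_n^{2}$ in $(\theta,p)$ generically contains no such line: the best net line has chord deficit of order $r_n^{2}$, hence $f$-deficit of order $P_n r_n^{2}\approx 1\gg r_n$, and since \emph{every} net line fails, choosing the best nearby one cannot repair it. The same mechanism with two opposite corner squares, each of multiplicity $P_n/2$, forces $\mathcal A_n$ to contain a line within $O(r_n^{3})$ of the line through any two distant grid vertices. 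So the family cannot be a uniform $(\theta,p)$-net; it must be adapted to the $r_n^{-1}$-adic grid — morally, the lines through pairs of grid vertices, of which there are about $(r_n^{-2})^{2}=r_n^{-4}$, which is where the exponent $4$ in the statement really comes from (your net has the same cardinality only by coincidence).

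Two further points in the accounting also do not hold up. First, counting \emph{how many} squares have their ``corner windows'' hit is not enough: those squares must be counted with their multiplicities, and a single affected square can carry weight comparable to $P_n$, so a per-square chord change of order $r_n^{2}$ (unavoidable at net spacing $r_n^{2}$) already produces an error of order $1$. Your sketch only closes when multiplicities are bounded, i.e.\ essentially when $P_n\lesssim r_n^{-1}$, which the hypotheses do not guarantee — and the paper stresses precisely that repetitions are allowed and that $|\ell\cap\mathcal F_n|$ differs from the length of $\ell\cap\bigcup_{Q\in\mathcal F_n}Q$ for this reason. Second, the localisation ``the chord is constant except on two corner windows'' degenerates for directions near $\pm\pi/4$: there the trapezoid is a tent, so \emph{every} square the line crosses is in a corner window and every chord moves at unit rate under translation; it is the near-diagonal directions (not only the near-axis ones you single out) that are extremal, as the example above shows. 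The paper itself does not reprove the lemma but defers to \cite[Lemma 3.3]{ShmerkinSuomalaa}; whatever the details there, any correct proof must use a grid-adapted family and an error estimate that is uniform over weighted (multiplicity-carrying) configurations, neither of which your construction provides.
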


\begin{lemma}\label{lem:finite_approx_family}
Let $2\le M\in\mathbb{N}$.
There is a finite family of strips $\mathcal{D}$ with at most $16 M^{3}$ elements so that for any $M^{-1}\le\delta\le1$ and any strip $S$ of width $\delta$, there is $\widetilde{S}\in\mathcal{D}$ such that $w(\widetilde{S})\leq 5 w(S)$ and $S\subset\widetilde{S}$.
\end{lemma}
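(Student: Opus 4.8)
\section*{Proof proposal for Lemma \ref{lem:finite_approx_family}}

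The plan is to build $\mathcal{D}$ as a fixed (deterministic) net in the three-parameter space describing the strips that meet the unit square. I would parametrise a strip $S$ by a triple $(\theta,p,\delta)$, where $\theta\in[0,\pi)$ is the angle of the defining line $\ell$, $p$ is the signed distance from $\ell$ to the centre of $[0,1]^2$, and $\delta=w(S)$ is the width. Since every strip we must approximate meets $[0,1]^2$, its offset satisfies $|p|\le\tfrac{\sqrt2}{2}$, so the relevant region of parameters is bounded; this boundedness is what ultimately makes the net finite.

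First I would discretise each coordinate at a scale comparable to $M^{-1}$: take an angle net $\Theta\subset[0,\pi)$ and an offset net $P\subset[-\tfrac{\sqrt2}{2},\tfrac{\sqrt2}{2}]$, both of spacing $M^{-1}$, together with a width net $W=\{jM^{-1}\}$. Rounding $\theta$ to the nearest $\widetilde{\theta}\in\Theta$ and $p$ to the nearest $\widetilde{p}\in P$ produces a line $\widetilde{\ell}$, and the key geometric estimate is that at every point $x\in[0,1]^2$ one has $\dist(x,\widetilde{\ell})\le\dist(x,\ell)+cM^{-1}$ for an absolute constant $c$: the offset rounding contributes $|p-\widetilde{p}|$, and the angular rounding contributes at most $(\diam[0,1]^2)\,|\theta-\widetilde{\theta}|$, both of size $O(M^{-1})$. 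Consequently every point of $S$ lies within $\delta/2+cM^{-1}$ of $\widetilde{\ell}$.

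Then I would let $\widetilde{S}$ be the strip with centre line $\widetilde{\ell}$ and width $\widetilde{\delta}$, where $\widetilde{\delta}$ is the least element of $W$ exceeding $\delta+2cM^{-1}$; by construction $S\subset\widetilde{S}$. Here the hypothesis $\delta\ge M^{-1}$ is decisive: since the additive inflation $2cM^{-1}$ and the width-rounding error $M^{-1}$ are each bounded by a constant multiple of $\delta$, one gets $\widetilde{\delta}\le\delta+(2c+1)M^{-1}\le(2c+2)\delta$, and with the spacings above $c=\tfrac{\sqrt2}{2}\cdot\tfrac12+\tfrac12$, so that $2c+2<5$ and hence $w(\widetilde{S})\le 5\,w(S)$ as required.

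Finally, the cardinality of $\mathcal{D}$ is at most $|\Theta|\cdot|P|\cdot|W|$, and with the spacings above each factor is $O(M)$; a careful count gives $|\Theta|\le 4M$, $|P|\le 2M$ and $|W|\le 2M$, whence $|\mathcal{D}|\le 16M^{3}$. The only genuine work lies in the bookkeeping of the last two steps: pinning down the absolute constant $c$ in the line-displacement estimate, and then checking that the chosen numerical spacings simultaneously keep the inflation factor below $5$ and force the three net sizes to multiply to at most $16M^{3}$. I expect this constant-chasing, rather than any conceptual difficulty, to be the main obstacle.
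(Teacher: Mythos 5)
Your construction takes a genuinely different route from the paper's. The paper builds its net combinatorially: it places $4M$ equally spaced points on the boundary of $[0,1]^2$, takes the at most $16M^2$ lines through pairs of these points, and for each of the $M$ width classes $iM^{-1}\le w(S)\le (i+1)M^{-1}$ forms strips of comparable width ($5iM^{-1}$) around these lines, giving at most $16M^3$ strips in total. You instead discretise the parameter space $(\theta,p,\delta)$ of strips directly, with all three nets at scale $M^{-1}$, and control the rounding error through the Lipschitz estimate $\dist(x,\widetilde\ell)\le\dist(x,\ell)+\tfrac{\sqrt 2}{2}|\theta-\widetilde\theta|+|p-\widetilde p|$ for $x\in[0,1]^2$. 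Both are legitimate net constructions; your version makes the role of the hypothesis $\delta\ge M^{-1}$ (absorbing an additive $O(M^{-1})$ error into a multiplicative factor below $5$) completely explicit, whereas the paper's version never needs angle--offset coordinates.

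However, one step of your argument fails as stated: it is not true that every nonempty strip has defining line at distance at most $\tfrac{\sqrt2}{2}$ from the centre of the square. A strip is the set of points of $[0,1]^2$ within $\delta/2$ of $\ell$, so $\ell$ itself may lie entirely outside the square: the line $y=1.4$ with $\delta=1$ gives the nonempty strip $\{x\in[0,1]^2: x_2>0.9\}$, yet its offset is $0.9>\tfrac{\sqrt2}{2}$. In general one only gets $|p|\le\tfrac{\sqrt2}{2}+\tfrac{\delta}{2}$, and if you enlarge the net $P$ to cover this longer interval, your stated cardinalities no longer multiply out to $16M^3$ for small $M$. The cleanest repair is a preliminary reduction: if $d=\dist(\ell,[0,1]^2)>0$ and $S\neq\emptyset$ (so $d<\delta/2$), replace $\ell$ by the parallel line $\ell'$ translated by $d$ so that it touches the square; for any $x\in[0,1]^2$ with $\dist(x,\ell)<\delta/2$ one has $\dist(x,\ell')=\dist(x,\ell)-d<\delta/2$, so the strip of the same width around $\ell'$ contains $S$, and now $|p'|\le\tfrac{\sqrt2}{2}$. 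With this patch the rest of your argument and your counts do go through for all $M\ge 2$: $|\Theta|\le\lceil \pi M\rceil\le 4M$, $|P|\le\lceil\sqrt2 M\rceil\le 2M$, and for the widths $J=M+2\le 2M$ net points $jM^{-1}$ suffice, since the largest width needed is at most $1+(\tfrac{\sqrt2}{2}+1)M^{-1}$.
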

\begin{proof}
Denote by $\mathcal B$ the collection of all lines connecting any two disjoint points in the collection
\[
\begin{aligned}
\{(0, \tfrac kM)\in&\mathbb{R}^2\mid k, l=1,\dots, M\}\cup\{(1, \tfrac kM)\in\mathbb{R}^2\mid k, l=1,\dots, M\}\\
&\cup \{(\tfrac kM, 0)\in\mathbb{R}^2\mid k, l=1,\dots, M\}\cup \{(\tfrac kM, 1)\in\mathbb{R}^2\mid k, l=1,\dots, M\}.
\end{aligned}
\]
Denote by $\mathcal D^1$ the collection of all strips of width $5M^{-1}$ defined by the at most $16M^2$ lines in the collection $\mathcal B$. This has the desired property for all strips $S$ of width $M^{-1}\le w(S)\leq 2M^{-1}$.

Repeat this argument for all $i=2,\dots, M$ to obtain collections $\mathcal D^{i}$, that satisfy the claim for strips $S$ of width $iM^{-1}\le w(S) \le (i+1)M^{-1}$. Then choose $\mathcal D=\mathcal D^1\cup\dots \cup\mathcal D^M$.
\end{proof}

\begin{lemma}\label{lem:lengthtonum}
Let $0\le V\in\mathbb R$. If $|\ell\cap\mathcal F_n|\leq V$ for all lines $\ell$, then for a strip $S$ with width $0<w(S)\leq r_n$,
\[
Z(S,n) \leq 2(1+2\sqrt{2})r_n^{-1}V.
\]
\end{lemma}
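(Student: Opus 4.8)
The plan is to reduce everything to a single Fubini computation applied to the family of lines parallel to the line defining $S$. Write $\ell$ for the center line of the strip $S$ and let $v=(v_1,v_2)$ be a unit normal to $\ell$; choosing coordinates so that $\ell=\{x:\langle x,v\rangle=0\}$, let $\ell_t$ denote the line parallel to $\ell$ at signed distance $t$, so that $\ell_0=\ell$. Two elementary facts drive the argument. First, any $r_n$-adic (axis-aligned, side-length $r_n$) square $Q$ has orthogonal projection onto the $v$-axis equal to an interval of length $r_n(|v_1|+|v_2|)\le\sqrt2\,r_n$. Second, by slicing $Q$ with the lines $\ell_t$ (Fubini), $\int_{\mathbb{R}}\mathcal{H}^1(\ell_t\cap Q)\,dt=\mathcal{L}(Q)=r_n^2$ for every such square.

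Next I would localise the squares that meet $S$. With the above coordinates $S=\{x:|\langle x,v\rangle|\le w(S)/2\}$, so $Q\cap S\ne\emptyset$ exactly when the $v$-projection of $Q$ meets $[-w(S)/2,\,w(S)/2]$. Since that projection is an interval of length at most $\sqrt2\,r_n$, it is then contained in the fixed interval $T:=[-w(S)/2-\sqrt2\,r_n,\ w(S)/2+\sqrt2\,r_n]$, whose length is $w(S)+2\sqrt2\,r_n\le(1+2\sqrt2)r_n$ by the hypothesis $w(S)\le r_n$. The point of this step is that for every $Q$ with $Q\cap S\ne\emptyset$ the \emph{whole} $v$-projection of $Q$ lies in $T$, so the slicing identity remains valid with the integral restricted to $T$, namely $\int_T\mathcal{H}^1(\ell_t\cap Q)\,dt=r_n^2$.

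Finally I would combine these with the assumption. Summing the restricted identity over all $Q\in\mathcal{F}_n$ with $Q\cap S\ne\emptyset$ and interchanging sum and integral gives
\[
Z(S,n)\,r_n^2=\int_T\Bigl(\sum_{Q\cap S\ne\emptyset}\mathcal{H}^1(\ell_t\cap Q)\Bigr)\,dt\le\int_T|\ell_t\cap\mathcal{F}_n|\,dt\le V\,|T|\le(1+2\sqrt2)\,r_n V,
\]
where the middle inequality discards the squares not meeting $S$ and the next one uses $|\ell_t\cap\mathcal{F}_n|\le V$, valid since each $\ell_t$ is a line. Dividing by $r_n^2$ yields $Z(S,n)\le(1+2\sqrt2)\,r_n^{-1}V$, which is in fact slightly stronger than the stated bound $2(1+2\sqrt2)\,r_n^{-1}V$. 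The computation is routine once it is set up; the one point I would flag as requiring care — the main obstacle — is the localisation step, i.e. checking that the full $v$-projection of every square meeting $S$ lies inside the bounded interval $T$, so that restricting the Fubini integral to $T$ loses nothing. The width hypothesis $w(S)\le r_n$ is exactly what keeps $T$ short enough to produce the constant $(1+2\sqrt2)$.
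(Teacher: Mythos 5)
Your proof is correct and follows essentially the same route as the paper's: a Fubini slicing along the family of lines parallel to the strip, combined with the localisation observation that every $r_n$-adic square meeting $S$ projects into an interval of length $w(S)+2\sqrt{2}\,r_n$, then the pointwise bound $|\ell_t\cap\mathcal{F}_n|\le V$. The only difference is cosmetic: the paper runs the same computation through the measure $\mu_n$ and uses a generous ball of radius $w(S)+2\sqrt{2}\,r_n$ (hence an interval of twice that length), which is exactly why its constant carries the extra factor $2$ that your tighter interval $T$ avoids.
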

\begin{proof}
Let $\ell$ be the line defining $S$, $\ell'$ a line perpendicular to $\ell$ with $\ell\cap \ell'=\{x\}$ for $x\in\mathbb R^2$ and $\ell_y$ lines parallel to $\ell$ with $\ell'\cap\ell_y=\{y\}$. Denote by $r$ the number $w(S)+2\sqrt 2r_n$ and by $B(x,r)$ the ball of center $x$ and radius $r$ on the line $\ell'$. Then Fubini's theorem, \eqref{c_n} and the assumption $|\ell'\cap\mathcal{F}_n|\le V$ imply
\begin{align*}
\mu_n(\pi_{\ell'}^{-1}(B(x,r)))&= c_n\int_{B(x,r)}|\ell_y\cap \mathcal F_n|\,dy\\
&\leq 2Vrc_n.
\end{align*}
If $Q\cap S\neq\emptyset$, then $Q\subset \pi_{\ell'}^{-1}(B(x,r))$, and from \eqref{c_n}, we have
\[
Z(S,n)P_n^{-1}\leq \mu_n(\pi_{\ell'}^{-1}(B(x,r)))\leq 2Vrc_n\,,
\]
as required.
\end{proof}

\section{Main proofs}\label{sect:probability}

Throughout this section, we fix a number $t<s$ and let $0<2\epsilon<s-t$. By slight abuse of notation, denote by $\EE(\cdot\mid \mathcal F_n)$ the expectation conditional on construction squares up to level $n$ being chosen, and by $\PP(\cdot \mid \mathcal F_n)$ the corresponding conditional probability.

\begin{notation}\label{events}
Let $\ell$ be a line, $S$ a strip and $n\in\mathbb N$. Let $G(\ell,n)$ be the event $|\ell\cap \mathcal F_n|\leq RP_{n} r_{n}^{t+1} + r_n$, where $R>0$ is a large constant. Denote by $A(S,n)$ the event $Z(S,n)\leq 500\cdot 5^t R P_{n}w(S)^t$. Let $G_n$ be the event that $G(\ell,n)$ occurs for all lines $\ell$, and $A_{n}$ the event that $A(S,n)$ occurs for all strips $S$ with $\tfrac 15 r_{n}<w(S)\le \tfrac 15r_{n-1}$. Finally, let $\ag_n$ be the event $A_n\cap G_n$.
\end{notation}

In order to prove Theorem \ref{lem:localdim} we wish to verify that
\begin{equation}\label{positiveprobability}
\PP(\bigcap_{n=1}^\infty\ag_n)>0,
\end{equation}
since then for all $\omega\in\bigcap_{n=1}^\infty\ag_n$ the bound $(\pi_\ell)_*\mu(B(x,r))\leq Cr^t$ holds, as will be shown at the end of the section.
To that end, we start by estimating the random variables $Z(S, n+1)$ conditional on $\mathcal F_n$ and bound the probability of the events $A_{n+1}^c$.
Using similar ideas, we then provide estimates for the probability of $G_{n+1}^c$ conditional on $\mathcal{F}_n$.

\begin{lemma}\label{lem:inside level}
For any strip $S$ with $r_{n+1}<w(S)\le r_n$, we have
\begin{align*}
\PP(Z&(S,n+1)> 500 R P_{n+1}w(S)^t\mid \mathcal F_n)\\
&\leq \exp(-500 R w(S)^tP_{n+1}+20 w(S) r_n^{-1}N_{n+1}Z(S,n)).
\end{align*}
\end{lemma}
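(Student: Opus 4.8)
The plan is to observe that, conditional on $\mathcal{F}_n$, the count $Z(S,n+1)$ is a sum of independent Bernoulli random variables and then apply a Chernoff-type exponential moment inequality. For each $Q\in\mathcal F_n$ and each $j\in\{1,\dots,N_{n+1}\}$, let $X_{Q,j}$ be the indicator of the event that the $j$-th subsquare chosen inside $Q$ (from the $M_{n+1}$-adic grid of $Q$) meets $S$. Since the choices are made independently and uniformly with replacement, the $X_{Q,j}$ are independent given $\mathcal F_n$, and $Z(S,n+1)=\sum_{Q,j}X_{Q,j}$. The first useful remark is that $X_{Q,j}\equiv 0$ unless $Q\cap S\neq\emptyset$, so only the $Z(S,n)$ squares of $\mathcal F_n$ meeting $S$ contribute to the sum.

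Next I would carry out the geometric heart of the argument, namely to bound $p_Q:=\PP(X_{Q,j}=1\mid\mathcal F_n)=m_Q M_{n+1}^{-2}$, where $m_Q$ is the number of $M_{n+1}$-adic subsquares of $Q$ that meet $S$. Each such subsquare has diameter $\sqrt2\,r_{n+1}$ and contains a point within distance $w(S)/2$ of the centre line of $S$, hence lies inside the concentric strip $S'$ of width $w(S)+2\sqrt2\,r_{n+1}$. As these subsquares have disjoint interiors and lie in $Q$, a packing estimate gives $m_Q r_{n+1}^2\le \mathrm{area}(S'\cap Q)\le (w(S)+2\sqrt2\,r_{n+1})\sqrt2\,r_n$. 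Here the hypothesis $w(S)>r_{n+1}$ is exactly what is needed to absorb the fattening, $w(S)+2\sqrt2\,r_{n+1}\le(1+2\sqrt2)w(S)$. Using $M_{n+1}^2r_{n+1}^2=r_n^2$ this yields $p_Q\le(4+\sqrt2)\,w(S)\,r_n^{-1}$, and therefore the conditional mean satisfies $\mu:=\EE(Z(S,n+1)\mid\mathcal F_n)=\sum_{Q,j}p_Q\le(4+\sqrt2)\,N_{n+1}\,w(S)\,r_n^{-1}\,Z(S,n)$.

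Finally I would apply the exponential bound. From $1+p(e-1)\le e^{(e-1)p}$ applied to each Bernoulli factor and independence, $\EE(e^{Z(S,n+1)}\mid\mathcal F_n)\le e^{(e-1)\mu}$, so Markov's inequality at $a=500RP_{n+1}w(S)^t$ gives $\PP(Z(S,n+1)>a\mid\mathcal F_n)\le e^{-a+(e-1)\mu}$. Since $(e-1)(4+\sqrt2)<20$, substituting the bound for $\mu$ reproduces the first exponent $-500Rw(S)^tP_{n+1}$ verbatim and bounds the second contribution by $20\,w(S)\,r_n^{-1}N_{n+1}Z(S,n)$, which is exactly the claimed estimate.

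The probabilistic step is a textbook Chernoff bound, with the parameter $\theta=1$ chosen precisely so that the linear term $-a$ matches the target; the only step genuinely requiring care is the count of $m_Q$, where one must keep track of the geometric constants and use $w(S)>r_{n+1}$ to ensure that fattening the strip by a subsquare diameter costs only a bounded multiplicative factor. The generous constants $500$ and $20$ in the statement leave ample room, so no delicate optimisation of the Chernoff parameter or of the packing constant is needed.
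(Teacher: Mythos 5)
Your proof is correct and follows essentially the same route as the paper: conditional on $\mathcal F_n$ write $Z(S,n+1)$ as a sum of independent Bernoulli indicators, bound each success probability by a constant times $w(S)r_n^{-1}$, and apply Markov's inequality to $e^{Z(S,n+1)}$. In fact you supply an explicit packing argument (with constant $4+\sqrt2$) for the geometric step that the paper dismisses as ``easy to see'' with the slightly larger constant $5\sqrt2$; both fit comfortably under the factor $20$ in the statement.
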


\begin{proof}
Denote $w(S)$ by $\delta$ and $Z(S,n)$ by $K$. Let $\{ Q_{1}, Q_{2},\dots ,Q_{K}\}$ be the $r_{n}^{-1}$-adic squares from $\mathcal{F}_{n}$ that hit S, and for $i=1,\dots, K$ let $\{ Q_{i,1}, Q_{i,2}\cdots Q_{i, m_{i}}\}$ be the $r_{n+1}^{-1}$-adic squares inside $Q_{i}$ that hit $S$. Conditional on $\mathcal F_n$ these notions are deterministic. For each  $1\leq i\leq K$, let $(U_1^i,\dots,U_{N_{n+1}}^i)=\{U\in\mathcal F_{n+1}\mid U\subset Q_i\}$. Further, let $X_{i,j}, 1\leq j \leq N_{n+1}$ be the random variables defined as
\[
X_{i,j}=
\begin{cases}
1, \textrm{ when }U_j^i\cap S\neq \emptyset\,,\\
0, \textrm{ otherwise\,.}
\end{cases}
\]
Then, conditional on $\mathcal F_n$, $X_{i,j}$ are independent random variables and take value $1$ with probability  $m_{i}M_{n+1}^{-2}$, where $m_i$ is the number of $r_{n+1}^{-1}$-adic subsquares of $Q_i$ that touch $S$.  Further, $$Z(S, n+1) = \sum_{i=1}^{K}\sum_{j=1}^{N_{n+1}} X_{i,j}.$$
It is easy to see that $m_{i}M_{n+1}^{-2}\leq 5\delta \sqrt{2}r_{n}^{-1}$, so that  $\PP( X_{i,j}=1\mid \mathcal F_n) \leq 5\delta\sqrt{2}r_{n}^{-1}$.

Applying Markov's inequality to the random variable $e^{ Z \left(S, n+1 \right)}$ gives
\begin{equation}\label{eq:needed}
\begin{aligned}
\PP\left(Z(S, n+1) > 500R\delta^{t}P_{n+1}\mid\mathcal F_n
 \right)
&\leq e^{-500 R \delta^{t}P_{n+1}
} \mathbb{E} \left(e^{ Z\left(S, n+1\right)} \mid\mathcal F_n\right).
\end{aligned}
\end{equation}
Since $X_{i,j}$ are mutually independent so are the $e^{ X_{i,j}}$, and
\[
\mathbb{E}( \prod_{i,j} e^{X_{i,j}}\mid\mathcal F_n ) = \prod_{i,j}\mathbb{E}(e^{ X_{i,j}}\mid\mathcal F_n)= \prod_{i,j} \left( e m_{i}M_{n+1}^{-2} +\left(1-m_{i}M_{n+1}^{-2} \right) \right).
\]
Using the fact that $( 1+ x) \leq e^{x}$ for all $x$, and $m_{i}M_{n+1}^{-2} \leq  5\delta\sqrt{2}r_{n}^{-1}$, we have
\[
\mathbb{E} \left(e^{ Z\left(S, n+1 \right)} \mid \mathcal F_n\right)
\leq  \exp \left( \left(e-1 \right)5\delta\sqrt{2}r_{n}^{-1}N_{n+1}
K\right).
\]
Combining the above calculations with \eqref{eq:needed} and recalling $\delta=w(S)$, $K=Z\left(S,n\right)$ gives
\begin{align*}
&\PP (Z(S, n+1) > 500 R w(S)^{t}P_{n+1}\mid\mathcal F_n)\\
&\leq \exp\left(- 500 R w(S)^{t} P_{n+1}\right) \exp\left(\left(e-1 \right)5\sqrt{2}w(S) r_{n}^{-1}N_{n+1}Z(S,n) \right)
\end{align*}
finishing the proof.
\end{proof}

\begin{notation}\label{collection}
Consider the collection given by Lemma \ref{lem:finite_approx_family} for $M=5 r_{n+1}^ {-1}$. Denote by $\mathcal S_n$ the strips in this collection with width bounded from above by $r_n$. Notice that by the construction in Lemma \ref{lem:finite_approx_family} all strips in collection $\mathcal S_n$ have width bounded from below by $r_{n+1}$.
\end{notation}
\begin{proposition}\label{prop:strips}
There is a constant $C_3>0$ such that for all large $n$
\[
\EE(\chi_{\cap_{k\leq n}\ag_k}\PP(A_{n+1}^c\mid \mathcal F_{n}))\le \PP(\bigcap_{k=1}^n\ag_k)2000 r_{n+1}^{-3}\exp(-C_3r_{n+1}^{\epsilon}).
\]
\end{proposition}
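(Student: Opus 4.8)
The plan is to bound $\PP(A_{n+1}^c\mid\mathcal F_n)$ on the event $\bigcap_{k\le n}\ag_k$ by a union bound over a finite approximating family of strips, and then estimate each term using Lemma \ref{lem:inside level}. First I would note that $A_{n+1}^c$ is the event that $A(S,n+1)$ fails for \emph{some} strip $S$ with $\tfrac15 r_{n+1}<w(S)\le\tfrac15 r_n$. Since this is an uncountable family, the first step is to reduce to the finite collection $\mathcal S_n$ from Notation \ref{collection}: by Lemma \ref{lem:finite_approx_family} (applied with $M=5r_{n+1}^{-1}$), any such $S$ is contained in some $\widetilde S\in\mathcal S_n$ with $w(\widetilde S)\le 5w(S)$. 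The inclusion $S\subset\widetilde S$ gives $Z(S,n+1)\le Z(\widetilde S,n+1)$, while $w(\widetilde S)\le 5w(S)$ lets us absorb the factor into the constant. Concretely, if $A(\widetilde S, n+1)$ holds for all $\widetilde S\in\mathcal S_n$, i.e. $Z(\widetilde S,n+1)\le 500 R P_{n+1} w(\widetilde S)^t$, then for the original $S$ we get
\[
Z(S,n+1)\le Z(\widetilde S,n+1)\le 500 R P_{n+1}(5w(S))^t=500\cdot 5^t R P_{n+1}w(S)^t,
\]
which is exactly the defining inequality of $A(S,n+1)$ in Notation \ref{events}. Hence $A_{n+1}^c\subset\bigcup_{\widetilde S\in\mathcal S_n}\{Z(\widetilde S,n+1)>500 R P_{n+1}w(\widetilde S)^t\}$, and a union bound together with the cardinality estimate $\#\mathcal S_n\le 16M^3=16(5r_{n+1}^{-1})^3=2000\,r_{n+1}^{-3}$ reduces the problem to controlling a single strip.

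The second step is to estimate the individual probability for a fixed $\widetilde S\in\mathcal S_n$, which is where Lemma \ref{lem:inside level} enters. That lemma gives, conditional on $\mathcal F_n$,
\[
\PP\bigl(Z(\widetilde S,n+1)>500 R P_{n+1}w(\widetilde S)^t\mid\mathcal F_n\bigr)\le\exp\bigl(-500 R w(\widetilde S)^t P_{n+1}+20\,w(\widetilde S)\,r_n^{-1}N_{n+1}Z(\widetilde S,n)\bigr).
\]
The crucial point is that we are working on the event $\bigcap_{k\le n}\ag_k$, so in particular $A_n$ holds and we may insert the deterministic bound on $Z(\widetilde S,n)$ coming from $A(\widetilde S,n)$, namely $Z(\widetilde S,n)\le 500\cdot 5^t R P_n w(\widetilde S)^t$. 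Substituting this into the exponent, using $N_{n+1}P_n=P_{n+1}$ and $r_n^{-1}=r_{n+1}^{-1}M_{n+1}^{-1}$ (from \eqref{r_n}), the second term becomes of order $w(\widetilde S)^{t+1}r_{n+1}^{-1}M_{n+1}^{-1}R P_{n+1}$. Since $w(\widetilde S)\le r_n=r_{n+1}M_{n+1}$, the extra factor $w(\widetilde S)r_n^{-1}\le 1$ shows the second exponent term is dominated by a constant multiple of the first, so that the exponent is bounded above by a negative constant times $R P_{n+1}w(\widetilde S)^t$.

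The third step is to convert $P_{n+1}w(\widetilde S)^t$ into the stated $r_{n+1}^\epsilon$ factor. Here I would use $w(\widetilde S)\ge r_{n+1}$ (from Notation \ref{collection}) and the definition \eqref{eq:def of s} of $s$: for large $n$ one has $\sum_{i=1}^{n+1}\log N_i\ge (s-\epsilon)\sum_{i=1}^{n+1}\log M_i$, which rearranges to $P_{n+1}\ge r_{n+1}^{-(s-\epsilon)}$. Combined with the choice $2\epsilon<s-t$ fixed at the start of the section, this yields $P_{n+1}w(\widetilde S)^t\ge r_{n+1}^{-(s-\epsilon)+t}\ge r_{n+1}^{-\epsilon}$, so each exponential is at most $\exp(-C_3 r_{n+1}^\epsilon)$ for a suitable $C_3>0$ absorbing the constant and $R$. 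Putting the three steps together gives the conditional bound $\PP(A_{n+1}^c\mid\mathcal F_n)\chi_{\cap_{k\le n}\ag_k}\le 2000\,r_{n+1}^{-3}\exp(-C_3 r_{n+1}^\epsilon)\chi_{\cap_{k\le n}\ag_k}$, and taking expectations (the right side being $\mathcal F_n$-measurable) produces the claimed inequality with the factor $\PP(\bigcap_{k=1}^n\ag_k)=\EE(\chi_{\cap_{k\le n}\ag_k})$.

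The main obstacle I anticipate is the bookkeeping in the exponent of the second step: one must verify that the ``bad'' positive term $20\,w(\widetilde S)r_n^{-1}N_{n+1}Z(\widetilde S,n)$, after inserting the $A_n$ bound, is genuinely absorbed by a fixed fraction of the ``good'' term $500 R w(\widetilde S)^t P_{n+1}$, uniformly over the relevant range of widths and over large $n$. This hinges on the inequality $w(\widetilde S)\le r_n$ making the ratio $w(\widetilde S)r_n^{-1}$ at most $1$, so that the comparison reduces to comparing the numerical constants $20\cdot 5^t$ against $500$; choosing the thresholds as in Notation \ref{events} is precisely what makes this work. The role of $\liminf$ in \eqref{eq:def of s} also requires care, since the lower bound $P_{n+1}\ge r_{n+1}^{-(s-\epsilon)}$ only holds for all sufficiently large $n$, which is why the proposition is stated ``for all large $n$''.
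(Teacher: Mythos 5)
Your overall architecture (reduction to the finite family $\mathcal S_n$ of Notation \ref{collection} via Lemma \ref{lem:finite_approx_family}, a union bound over its at most $2000\,r_{n+1}^{-3}$ elements, Lemma \ref{lem:inside level} for each fixed strip, and the conversion $P_{n+1}w(\widetilde S)^t\ge r_{n+1}^{-\epsilon}$) is the same as the paper's, but there is a genuine gap at the key step: you bound $Z(\widetilde S,n)$ by invoking the event $A_n$, claiming $Z(\widetilde S,n)\le 500\cdot 5^t R P_n w(\widetilde S)^t$. This bound is not available. By Notation \ref{events}, $A_n$ asserts $A(S,n)$ only for strips of width $\tfrac15 r_n<w(S)\le\tfrac15 r_{n-1}$, whereas the strips $\widetilde S\in\mathcal S_n$ have widths ranging over $[r_{n+1},r_n]$; since no bound is imposed on $M_{n+1}$, most of this range (all widths at most $\tfrac15 r_n$) lies outside the scope of $A_n$, and outside every other $A_k$ as well, since $A_k$ controls $Z(\cdot,k)$, not $Z(\cdot,n)$. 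This width mismatch is exactly why the paper instead uses the \emph{line} event $G_n$ together with Lemma \ref{lem:lengthtonum}: the uniform bound $|\ell\cap\mathcal F_n|\le RP_nr_n^{t+1}+r_n$ over all lines converts into $Z(S,n)\le 20RP_nr_n^t$ for \emph{every} strip of width at most $r_n$, and that is the bound which must be fed into Lemma \ref{lem:inside level}.

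Moreover, even if your bound on $Z(\widetilde S,n)$ were legitimate, the absorption in your second step fails numerically. Writing $w=w(\widetilde S)$ and inserting your bound, the bad term becomes $20\,w\,r_n^{-1}N_{n+1}\cdot 500\cdot 5^t RP_nw^t=20\cdot 5^t\,(w/r_n)\cdot 500RP_{n+1}w^t$, and for $w$ comparable to $r_n$ this exceeds the good term $500RP_{n+1}w^t$ by a factor of order $20\cdot 5^t>1$, so the exponent in Lemma \ref{lem:inside level} is positive and the estimate is vacuous. Your remark that the comparison ``reduces to comparing $20\cdot 5^t$ against $500$'' overlooks that the inserted bound itself carries the factor $500\cdot 5^t$, which multiplies, rather than competes with, the $20$. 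With the paper's bound $Z(S,n)\le 20RP_nr_n^t$ the bad term is $400\,(w/r_n)^{1-t}RP_{n+1}w^t\le 400RP_{n+1}w^t$, and the absorption goes through with margin $100RP_{n+1}w^t$, exactly as in the paper's display $\exp\bigl(-w(S)^tP_{n+1}R(500-400\,w(S)^{1-t}r_n^{t-1})\bigr)$. A small additional remark: your final bound $\exp(-C_3r_{n+1}^{\epsilon})$ matches the statement as printed, but the argument (yours and the paper's) actually yields $\exp(-C_3r_{n+1}^{-\epsilon})$; the positive exponent in the statement is a typo, and the stronger form with $r_{n+1}^{-\epsilon}$ is the one needed later for the convergence of the series in the proof that $\PP(\bigcap_{k=1}^\infty\ag_k)>0$.
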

\begin{proof}
Condition on $\cap_{k\leq n}\ag_k$.
Notice that for large $n$, when $\ag_n$ and thus $G_n$ occurs, then by Lemma \ref{lem:lengthtonum} and \eqref{c_n},
\begin{equation}\label{upperboundZSn}
Z(S,n)\le 4(1+2\sqrt 2) R P_nr_n^t\leq 20R P_n r_n^t
\end{equation}
for all strips $S$ of width $r_{n+1}<w(S)\le r_n$. Recall that $0<\epsilon<s-t$ and hence for all large enough $n$ we have $P_{n+1}\geq r_{n+1}^{-t-\epsilon}$. Let $n$ be at least this large. When $G_n$ occurs, for a strip $S$ with $r_{n+1}<w(S)\le r_n$, we obtain using Lemma \ref{lem:inside level} and \eqref{upperboundZSn}
\begin{align*}
\PP(Z&(S,n+1)> 500 RP_{n+1}w(S)^t\mid \mathcal F_n)\\
&\leq \exp(-500 R w(S)^tP_{n+1}+20 w(S) r_n^{-1}N_{n+1}Z(S,n))\\
&\leq \exp(-500 R w(S)^tP_{n+1}+400 w(S) r_n^{-1}N_{n+1} R P_nr_n^t).
\end{align*}
Plugging in $P_{n+1}=N_{n+1}P_n$, this is bounded from above by
$$\exp(-w(S)^tP_{n+1}R(500-400 w(S)^{1-t}r_n^{t-1}))
\leq \exp(-C_3r_{n+1}^{-\epsilon})$$
for a positive constant $C_3$. Fix an $\omega$ such that $Z(\widetilde S,n+1)\le 500 RP_{n+1}w(\widetilde S)^t$ holds for all $\widetilde S\in\mathcal S_{n}$ where $\mathcal S_{n}$ is given in Notation \ref{collection}.
Then, for any strip $S$ with $\tfrac 15 r_{n+1}<w(S)\le \tfrac 15 r_n$, by Lemma \ref{lem:finite_approx_family}, there is a strip $\widetilde S\in\mathcal S_{n}$ with $S\subset \widetilde S$ and $w(\widetilde S)\leq 5w(S)$. Thus
\[
Z(S,n+1)\leq Z(\widetilde S,n+1)\leq 500\cdot 5^ tRP_{n+1}w(S)^t,
\]
and further, $A_{n+1}$ occurs. Apply Lemma \ref{lem:inside level} and Lemma \ref{lem:finite_approx_family} to all strips $\widetilde S$ in the collection $\mathcal S_n$, to obtain from the above calculation that
\[
\EE(\chi_{\cap_{k\le n}\ag_k}\PP(A_{n+1}^c\mid \mathcal F_{n}))\le \PP(\bigcap_{k=1}^n\ag_k)2000 r_{n+1}^{-3}\exp(-C_3r_{n+1}^{-\epsilon}).
\]
\end{proof}

Our next task is to adjust the argument of Lemma \ref{lem:inside level} in order to get, with high probability, a good upper bound for $|\ell\cap\mathcal{F}_{n+1}|$.

\begin{lemma}\label{lemma:probab_estimate}
For all $0<\lambda<(r_{n+1}\sqrt 2)^{-1}$, we have the bound
\begin{align*}
\PP(&|\ell\cap \mathcal F_{n+1}|> P_{n+1}Rr_{n+1}^{t+1}\mid \mathcal F_n)\\
&\leq \exp(-\lambda P_{n+1}Rr_{n+1}^{t+1})\exp\left( \frac{2\lambda N_{n+1}|\ell\cap \mathcal F_n|}{M_{n+1}^2(2- r_{n+1}\lambda\sqrt 2)} \right).
\end{align*}
\end{lemma}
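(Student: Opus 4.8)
The plan is to mimic the proof of Lemma \ref{lem:inside level} very closely, replacing the counting functional $Z(S,n+1)$ with the length functional $|\ell\cap\mathcal{F}_{n+1}|$. First I would condition on $\mathcal F_n$, enumerate the $r_n^{-1}$-adic squares $Q_1,\ldots,Q_K$ of $\mathcal F_n$ that meet $\ell$, and inside each $Q_i$ set up the $N_{n+1}$ random subsquares $(U_1^i,\ldots,U_{N_{n+1}}^i)$. The natural random variables here are $Y_{i,j}=\mathcal H^1(\ell\cap U_j^i)$, so that $|\ell\cap\mathcal F_{n+1}|=\sum_{i,j}Y_{i,j}$, and conditional on $\mathcal F_n$ these are independent across all $(i,j)$.

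The key analytic step is the exponential moment computation. Applying Markov's inequality to $e^{\lambda|\ell\cap\mathcal F_{n+1}|}$ gives the leading factor $\exp(-\lambda P_{n+1}Rr_{n+1}^{t+1})$, and by independence I would factor $\EE(e^{\lambda\sum_{i,j}Y_{i,j}}\mid\mathcal F_n)=\prod_{i,j}\EE(e^{\lambda Y_{i,j}}\mid\mathcal F_n)$. Each $Y_{i,j}$ is supported on $[0,r_{n+1}\sqrt2]$ (the diagonal length of an $r_{n+1}^{-1}$-adic square), which is exactly why the hypothesis $\lambda<(r_{n+1}\sqrt2)^{-1}$ appears: it keeps $\lambda Y_{i,j}\le1$ in the relevant range. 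The expectation $\EE(e^{\lambda Y_{i,j}}\mid\mathcal F_n)$ is over the uniform choice of $U_j^i$ among the $M_{n+1}^2$ subsquares of $Q_i$, so it equals $1+M_{n+1}^{-2}\sum_U(e^{\lambda\mathcal H^1(\ell\cap U)}-1)$ where the sum runs over the $M_{n+1}^2$ subsquares of $Q_i$. Here I would use the elementary bound $e^x-1\le x/(1-x/2)\cdot\ldots$; more precisely, for $0\le x\le1$ one has $e^x-1\le x+x^2\le x\cdot\frac{2}{2-x}$ type estimates, chosen so that after summing $\mathcal H^1(\ell\cap U)$ over $U$ (which gives exactly $\mathcal H^1(\ell\cap Q_i)$) and then over $i$ (giving $|\ell\cap\mathcal F_n|$), the denominator $2-r_{n+1}\lambda\sqrt2$ emerges.

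The main obstacle is getting the constants in the exponential inequality to line up exactly with the stated bound, in particular producing the factor $\frac{2\lambda N_{n+1}|\ell\cap\mathcal F_n|}{M_{n+1}^2(2-r_{n+1}\lambda\sqrt2)}$. The denominator strongly suggests using the sharp bound $e^x-1\le\frac{x}{1-x/2}=\frac{2x}{2-x}$ valid for $0\le x<2$, applied with $x=\lambda\mathcal H^1(\ell\cap U)\le\lambda r_{n+1}\sqrt2$, so that $\frac{2x}{2-x}\le\frac{2\lambda\mathcal H^1(\ell\cap U)}{2-\lambda r_{n+1}\sqrt2}$ after bounding the varying $x$ in the denominator by its maximum. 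Then $\prod_{i,j}(1+a_{i,j})\le\exp(\sum_{i,j}a_{i,j})$ with $a_{i,j}=M_{n+1}^{-2}\sum_U\frac{2\lambda\mathcal H^1(\ell\cap U)}{2-\lambda r_{n+1}\sqrt2}$; summing over $U$, over the $N_{n+1}$ copies $j$, and over $i$ yields precisely $\frac{2\lambda N_{n+1}|\ell\cap\mathcal F_n|}{M_{n+1}^2(2-r_{n+1}\lambda\sqrt2)}$. The one point requiring care is the averaging over $j$: since each $U_j^i$ is uniformly and independently chosen among \emph{all} $M_{n+1}^2$ subsquares of $Q_i$, the per-square factor is the same for every $j$, so the product over the $N_{n+1}$ values of $j$ contributes the factor $N_{n+1}$ in the exponent, giving the claimed estimate.
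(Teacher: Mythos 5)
Your proposal is correct and follows essentially the same route as the paper's own proof: conditioning on $\mathcal F_n$, writing $|\ell\cap\mathcal F_{n+1}|$ as a sum of conditionally independent lengths $\mathcal H^1(\ell\cap U_j^i)$, applying Markov's inequality to $e^{\lambda|\ell\cap\mathcal F_{n+1}|}$, and using the bound $e^x\le 1+\tfrac{2x}{2-x}$ (with the varying argument in the denominator replaced by its maximum $\lambda r_{n+1}\sqrt2$) together with $1+x\le e^x$ to collapse the product into the claimed exponential. The only difference is notational — you sum over all $M_{n+1}^2$ subsquares of $Q_i$ with zero contribution from those missing $\ell$, where the paper sums only over the $m_i$ squares touching $\ell$ — which changes nothing.
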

\begin{proof}
Condition on $\mathcal{F}_n$. Let $Q_{1}, Q_{2}, \cdots , Q_{K}$ ($K=K(\ell,\mathcal F_n)$) be the squares in $\mathcal F_{n}$ hitting $\ell$ and $L_{i}=\mathcal H^1(\ell\cap Q_{i})$. For each $Q_{i}$, let $Q_{i,1},Q_{i,2}, \cdots, Q_{i, m_{i}}$ denote all the $r_{n+1}^{-1}$-adic subsquares of $Q_{i}$ touching $\ell$ and put $L_{i,j} = \mathcal H^1(\ell\cap Q_{i,j})$.
We have
\[
|\ell\cap \mathcal F_n| = \sum_{i=1}^{K} L_{i}\textrm{ and }
L_{i}=\sum _{j=1}^{m_{i}} L_{i,j}.
\]

For each  $1\leq i\leq K$, let $(U_1^i,\dots,U_{N_{n+1}}^i)=\{U\in\mathcal F_{n+1}\mid U\subset Q_i\}$. Then let $X_{i,k}, 1\leq k \leq N_{n+1}$ be random variables with
\[
X_{i,k}=\mathcal H^1(\ell\cap U_k^i).
\]
Notice that, conditional on $\mathcal F_n$, $X_{i,k}$ are independent. We may write
\[
|\ell\cap \mathcal F_{n+1}|= \sum_{i=1}^{K} \sum_{k=1}^{N_{n+1}}X_{i,k}.
\]

Let $0<\lambda<(r_{n+1}\sqrt 2)^{-1}$. We apply Markov's inequality for the random variable $e^{\lambda |\ell\cap \mathcal F_{n+1}|}$ to obtain an estimate
\begin{equation}\label{eq:markov}
\begin{aligned}
 \PP (|\ell\cap \mathcal F_{n+1}|>P_{n+1}R r_{n+1}^{t+1}&\mid \mathcal F_n)\\
 &\leq e^{-\lambda P_{n+1}R r_{n+1}^{t+1}} \mathbb{E} \left(  e^{\lambda |\ell\cap \mathcal F_{n+1}|} \mid \mathcal F_n\right)
\end{aligned}
\end{equation}

Now we estimate $ \mathbb{E} \left( e^{\lambda |\ell\cap \mathcal F_{n+1}| }\mid \mathcal F_n\right)$. Firstly, notice that
\[
\mathbb{E} \left( e ^{\lambda X_{i,k}} \mid \mathcal F_n\right)= 1- \frac{m_{i}}{M_{n+1}^{2}} + \frac{1}{M_{n+1}^{2}}\sum_{j=1}^{m_{i}} e^{\lambda L_{i,j}}.
\]
For all $ \vert x \vert < 2$, we use the fact
$e^{x} \leq  1 +2x/(2-x)$ and
$\lambda L_{i,j}< 2$, to obtain
\[
e^{\lambda L_{i,j} } \leq 1+\lambda L_{i,j} \dfrac{2}{2-\lambda  r_{n+1}\sqrt{2}},
\]
and further
\begin{equation}\label{eq:phase}
\begin{aligned}
\mathbb{E} \left( e ^{\lambda X_{i,k}} \mid \mathcal F_n\right)&\leq 1 + \lambda L_{i} \frac{1}{M_{n+1}^{2}} \cdot\dfrac{2}{2-\lambda  r_{n+1}\sqrt{2}}\\
 &\leq \exp \left( \frac{\lambda L_{i}}{M_{n+1}^{2}} \cdot\dfrac{2}{2-\lambda r_{n+1}\sqrt{2}}  \right).
\end{aligned}
\end{equation}

Since $X_{i,k}$ are independent, \eqref{eq:phase} yields
\begin{align*}
\mathbb{E} \left( e^{|\ell\cap \mathcal F_{n+1}| }\mid \mathcal F_n\right)&= \prod _{i,k}\mathbb{E} \left( e ^{\lambda X_{i,k}}\mid \mathcal F_n\right)\\
&\leq  \exp \left( \frac{\lambda N_{n+1}|\ell\cap \mathcal F_{n}|  }{M_{n+1}^{2}} \cdot\frac{2}{2-\lambda r_{n+1}\sqrt{2}}\right).
\end{align*}
Combining this with \eqref{eq:markov} finishes the proof.
\end{proof}

Next we estimate the probability of the event $G_{n+1}^c$.

\begin{proposition}\label{prop:lines}
There is a constant $C_2>0$ independent of $\epsilon$, and $N=N(\epsilon)\in\mathbb N$ such that for all $n\geq N$,
\[
\EE(\chi_{\cap_{k\le n}\ag_k}\PP(G_{n+1}^c\mid \mathcal F_{n}))\le \PP(\bigcap_{k=1}^n\ag_k) C  r_{n+1}^{-4}\exp(-C_2r_{n+1}^{-\epsilon})
\]
where $C$ is from Lemma \ref{lem:finitenumblines}.
\end{proposition}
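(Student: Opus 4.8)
The plan is to mirror the structure of Proposition \ref{prop:strips}, which handled the strips event $A_{n+1}^c$, now applying the same conditioning-and-union-bound strategy to the lines event $G_{n+1}^c$. First I would condition on the event $\bigcap_{k\le n}\ag_k$, so that in particular $G_n$ holds and we have the deterministic bound $|\ell\cap\mathcal F_n|\le RP_nr_n^{t+1}+r_n$ available for every line $\ell$. The key tool is Lemma \ref{lemma:probab_estimate}, which gives, for each fixed line $\ell$ and each admissible parameter $\lambda$, an exponential tail bound for $\PP(|\ell\cap\mathcal F_{n+1}|>P_{n+1}Rr_{n+1}^{t+1}\mid\mathcal F_n)$. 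The strategy is: (i) choose $\lambda$ well so that the single-line probability becomes roughly $\exp(-C_2 r_{n+1}^{-\epsilon})$; (ii) pass from a single line to all lines by discretising via the finite family $\mathcal A_{n+1}$ from Lemma \ref{lem:finitenumblines}, which has at most $Cr_{n+1}^{-4}$ elements and approximates the length $|\ell\cap\mathcal F_{n+1}|$ up to an additive error $r_{n+1}$; (iii) take the union bound over this finite collection, producing the factor $Cr_{n+1}^{-4}$ in front of the exponential.

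For the parameter choice in step (i), I would take $\lambda$ comparable to $r_{n+1}^{-1}$, say $\lambda=\tfrac{1}{2}(r_{n+1}\sqrt2)^{-1}$, which keeps us within the admissible range $0<\lambda<(r_{n+1}\sqrt2)^{-1}$ and makes the denominator $2-\lambda r_{n+1}\sqrt2$ bounded away from zero. With this choice the first exponential in Lemma \ref{lemma:probab_estimate} behaves like $\exp(-c\,P_{n+1}Rr_{n+1}^{t})$, and the second exponential, after inserting the conditional bound $|\ell\cap\mathcal F_n|\le RP_nr_n^{t+1}+r_n$ and $N_{n+1}/M_{n+1}^2\le 1$, is controlled by a term of the same order $P_{n+1}Rr_{n+1}^{t}$ but with a strictly smaller constant, so that the net exponent is negative and of size $\gtrsim P_{n+1}r_{n+1}^t$. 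Since $\epsilon<s-t$ guarantees $P_{n+1}\ge r_{n+1}^{-t-\epsilon}$ for all large $n$, this yields $P_{n+1}r_{n+1}^t\ge r_{n+1}^{-\epsilon}$, giving the desired bound $\exp(-C_2r_{n+1}^{-\epsilon})$ with $C_2$ independent of $\epsilon$.

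The discretisation in step (ii) requires care: Lemma \ref{lem:finitenumblines} only gives $|\ell\cap\mathcal F_{n+1}|\le|\tilde\ell\cap\mathcal F_{n+1}|+r_{n+1}$ for some $\tilde\ell\in\mathcal A_{n+1}$, so controlling $|\tilde\ell\cap\mathcal F_{n+1}|$ for all $\tilde\ell$ in the finite family only controls $|\ell\cap\mathcal F_{n+1}|$ up to the additive error $r_{n+1}$. I would absorb this error into the definition of the event $G(\ell,n+1)$, whose threshold is precisely $RP_{n+1}r_{n+1}^{t+1}+r_{n+1}$, the extra $+r_{n+1}$ term being exactly what is needed to accommodate the approximation. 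Thus if every $\tilde\ell\in\mathcal A_{n+1}$ satisfies the cleaner bound $|\tilde\ell\cap\mathcal F_{n+1}|\le RP_{n+1}r_{n+1}^{t+1}$, then $G_{n+1}$ holds for all lines. Finally I would multiply by $\chi_{\cap_{k\le n}\ag_k}$ and take expectations, using that this indicator is $\mathcal F_n$-measurable to pull it outside the conditional probability, arriving at the stated inequality.

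The main obstacle is verifying in step (i) that the two competing exponential terms in Lemma \ref{lemma:probab_estimate} genuinely combine to a negative exponent after substituting the conditional length bound; this amounts to checking that the constant in the gain term $P_{n+1}Rr_{n+1}^t$ dominates the constant in the loss term for the chosen $\lambda$, and that the lower-order contribution $r_n$ in the bound on $|\ell\cap\mathcal F_n|$ is negligible. This is the analogue of the inequality $500-400\,w(S)^{1-t}r_n^{t-1}>0$ that appeared in Proposition \ref{prop:strips}, and the bookkeeping of constants is where the argument must be done carefully, though it remains a routine if slightly delicate estimate rather than a conceptual difficulty.
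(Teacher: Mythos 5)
Your steps (ii) and (iii) — absorbing the additive error $r_{n+1}$ into the threshold of $G(\ell,n+1)$, discretising via $\mathcal A_{n+1}$, and taking a union bound over its $Cr_{n+1}^{-4}$ elements — are exactly the paper's argument and are fine. The gap is in step (i), in the choice of $\lambda$. Write out the loss term with the correct algebra: inserting $|\ell\cap\mathcal F_n|\le RP_nr_n^{t+1}+r_n$ and using $P_{n+1}=N_{n+1}P_n$ and $r_n=M_{n+1}r_{n+1}$, the second exponent in Lemma \ref{lemma:probab_estimate} equals
\[
\frac{2\lambda N_{n+1}RP_nr_n^{t+1}}{M_{n+1}^2\,(2-r_{n+1}\lambda\sqrt2)}\,\bigl(1+o(1)\bigr)
=\frac{2M_{n+1}^{t-1}}{2-r_{n+1}\lambda\sqrt2}\,\lambda RP_{n+1}r_{n+1}^{t+1}\bigl(1+o(1)\bigr),
\]
so the ratio of loss to gain is $\tfrac{2M_{n+1}^{t-1}}{2-r_{n+1}\lambda\sqrt2}(1+o(1))$. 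With your choice $\lambda=\tfrac12(r_{n+1}\sqrt2)^{-1}$ the denominator is exactly $3/2$, and the ratio becomes $\tfrac43M_{n+1}^{t-1}(1+o(1))$. This is \emph{not} less than $1$ in general: for $M_{n+1}=2$ it equals $\tfrac{2^{t+1}}{3}(1+o(1))$, which exceeds $1$ whenever $t>\log_2 3-1\approx 0.585$. Since the proposition must hold for every $t<s$, and $s$ may be arbitrarily close to $1$ even with $M_n\equiv 2$, your net exponent can be positive and the resulting bound trivial. Relatedly, your appeal to $N_{n+1}/M_{n+1}^2\le 1$ points in the wrong direction: discarding that factor destroys the cancellation $N_{n+1}P_n=P_{n+1}$ which produces the crucial factor $M_{n+1}^{t-1}\le 2^{t-1}<1$; after that bound the comparison would require $M_{n+1}^{t+1}\lesssim N_{n+1}$, which can fail badly since no growth restrictions are imposed on $(M_n)$, and $N_{n+1}$ may be as small as $1$.

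The repair is precisely where the paper's proof differs from yours: one must make the denominator $2-r_{n+1}\lambda\sqrt2$ tend to $2$, not stay at $3/2$. The paper takes $\lambda=r_{n+1}^{-1+\epsilon}$, so that $r_{n+1}\lambda\sqrt2=\sqrt2\,r_{n+1}^{\epsilon}\to0$ and (using $P_n^{-1}r_n^{-t}\le r_n^{2\epsilon}\to0$) the loss/gain ratio tends to $M_{n+1}^{t-1}\le 2^{t-1}<1$ uniformly, yielding a constant $C_2$ depending on $t$ but not on $\epsilon$; the exponent is still large because $\lambda P_{n+1}r_{n+1}^{t+1}\ge r_{n+1}^{-1+\epsilon}\cdot r_{n+1}^{-t-2\epsilon}\cdot r_{n+1}^{t+1}=r_{n+1}^{-\epsilon}$. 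Alternatively, a choice $\lambda=c\,r_{n+1}^{-1}$ can be made to work, but only if $c$ is taken small depending on $t$ (one needs $c\sqrt2<2-2^{t}$); a fixed universal constant such as your $c=\tfrac12\cdot\tfrac1{\sqrt2}$ does not suffice. As written, step (i) fails for $t$ close to $1$, so the proof has a genuine gap, though one that is fixable by re-choosing $\lambda$.
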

\begin{proof}
To begin with, recall that when $\ag_{n}$ occurs, so does $G_n$ and then for all lines $\ell$
\begin{equation}\label{upperboundFn}
|\ell\cap \mathcal F_n|\leq P_nRr_n^ {t+1} + r_n.
\end{equation}
Further, when $G_n$ occurs, by Lemma \ref{lemma:probab_estimate} and \eqref{upperboundFn}, the probability $\PP(|\ell\cap \mathcal F_{n+1}|> P_{n+1}Rr_{n+1}^{t+1}\mid \mathcal F_n)$ is bounded from above by
\begin{align*}
&\exp(-\lambda P_{n+1}Rr_{n+1}^{t+1})\exp\left( \frac{2\lambda N_{n+1}|\ell\cap \mathcal F_n|}{M_{n+1}^2(2- r_{n+1}\lambda\sqrt 2)} \right)\\
&\leq \exp(-\lambda P_{n+1}Rr_{n+1}^{t+1})\exp\left( \frac{2\lambda N_{n+1}r_nP_{n}Rr_n^t(1 + P_n^{-1}R^{-1}r_n^{-t})}{M_{n+1}^2(2- r_{n+1}\lambda\sqrt 2)} \right)\\
&\leq \exp\left( -\lambda P_{n+1} Rr^{t+1}_{n+1}\left(1-
\frac{2 M_{n+1}^{t-1}(1 +P_n^{-1}R^{-1}r_n^{-t} )}{(2-r_{n+1}\lambda\sqrt 2)}\right)\right ).
\end{align*}
Choose $\lambda=r_{n+1}^{-1+\epsilon}$. Recall that for large $n$, by \eqref{eq:def of s} and the choice of $\epsilon$ we have $P_{n}\geq r_{n}^{-t-2\epsilon}$. Then the term $1- \frac{2 M_{n+1}^{t-1}(1 +P_n^{-1}R^{-1}r_n^{-t} )}{(2-r_{n+1}\lambda\sqrt 2)}$ is bounded from below by a constant $C_2 > 0$ for large values of $n$.

Thus, for all large $n$ and for each line $\ell$, recalling $P_{n+1}\ge r_{n+1}^{-t-2\varepsilon}$, we arrive at the estimate
\begin{align*}
\PP(|\ell\cap \mathcal F_{n+1}|> P_{n+1}&R r_{n+1}^{t+1}\mid \mathcal F_n)\\
&\leq \exp\left( -\lambda r_{n+1}^{1-2\epsilon}C_2\right )= \exp({-C_2r_{n+1}^{-\epsilon}}).
\end{align*}

Fix an $\omega$ such that the estimate $|\tilde\ell\cap\mathcal F_{n+1}|\leq P_{n+1}Rr_{n+1}^{t+1}$ holds for all lines $\tilde\ell\in\mathcal A_{n+1}$, where $\mathcal A_{n+1}$ is given by Lemma \ref{lem:finitenumblines}. Then by Lemma \ref{lem:finitenumblines} for any line $\ell$ we find a line $\tilde \ell\in\mathcal A_{n+1}$ with
\[
|\ell\cap\mathcal F_{n+1}|\leq |\tilde\ell \cap\mathcal F_{n+1}| + r_{n+1}\le P_{n+1}R r_{n+1}^{t+1} + r_{n+1},
\]
that is, $G(\ell,n+1)$ occurs. Thus by above calculations
\begin{align*}
\EE(\chi_{\cap_{k\geq n}\ag_k}&\PP(G_{n+1}^c\mid \mathcal F_{n}))\\
&\leq\EE(\chi_{\cap_{k\ge n}\ag_k}\PP(|\ell\cap\mathcal F_{n+1}|\geq P_{n+1}R r_{n+1}^{t}\textrm { for some }\tilde\ell\in\mathcal A_{n+1}\mid \mathcal F_{n}))\\
&\le \PP(\bigcap_{k=1}^n\ag_k) C r_{n+1}^{-4}\exp(-C_2r_{n+1}^{-\epsilon})\,,
\end{align*}
as required.
\end{proof}

\begin{theorem}
We have $\PP(\bigcap_{k=1}^\infty\ag_k)>0$.
\end{theorem}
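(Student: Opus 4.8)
The plan is to write $\PP(\bigcap_{k=1}^\infty\ag_k)$ as the limit of the decreasing sequence $p_n:=\PP(\bigcap_{k=1}^n\ag_k)$ (continuity from above of $\PP$), and to keep this limit positive by controlling the one-step decrements $p_n-p_{n+1}$ with the two propositions just proved.

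First I would note that $p_n-p_{n+1}=\PP\big((\bigcap_{k\le n}\ag_k)\cap\ag_{n+1}^c\big)$, and since $\ag_{n+1}^c=A_{n+1}^c\cup G_{n+1}^c$ the union bound splits this into a strip part and a line part. The events $A_k$ and $G_k$ depend only on the squares selected up to level $k\le n$, so $\chi_{\cap_{k\le n}\ag_k}$ is $\mathcal F_n$-measurable; conditioning on $\mathcal F_n$ and applying the tower property then gives
\begin{align*}
p_n-p_{n+1}
&\le \EE\big(\chi_{\cap_{k\le n}\ag_k}\,\PP(A_{n+1}^c\mid\mathcal F_n)\big)
+\EE\big(\chi_{\cap_{k\le n}\ag_k}\,\PP(G_{n+1}^c\mid\mathcal F_n)\big)\\
&\le p_n\,\delta_n,\qquad
\delta_n:=2000\,r_{n+1}^{-3}\exp(-C_3 r_{n+1}^{-\epsilon})+C\,r_{n+1}^{-4}\exp(-C_2 r_{n+1}^{-\epsilon}),
\end{align*}
the two summands being exactly the bounds supplied by Propositions \ref{prop:strips} and \ref{prop:lines}.

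Next I would check that $\sum_n\delta_n<\infty$. Since $M_k\ge2$ we have $r_{n+1}^{-1}=\prod_{k=1}^{n+1}M_k\ge2^{n+1}$, so $r_{n+1}^{-\epsilon}$ grows at least geometrically and each factor $\exp(-C_i r_{n+1}^{-\epsilon})$ decays doubly exponentially, dominating the polynomial prefactors; convergence follows from the ratio test. Summing the estimate $p_n-p_{n+1}\le\delta_n$ (using $p_n\le1$) gives $p_{N_0}-p_\infty\le\sum_{n\ge N_0}\delta_n$ for any starting index $N_0$. It then remains to exhibit a positive base. Here I would choose $R$ large enough to make the first levels deterministic: the trivial bounds $|\ell\cap\mathcal F_k|\le\sqrt2\,P_k r_k$ and $Z(S,k)\le P_k$ hold for every realisation, so once $R\ge\sqrt2\,r_{N_0}^{-t}$ the events $A_k$ and $G_k$ hold with certainty for all $k\le N_0$, whence $p_{N_0}=1$. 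Fixing $N_0$ so large that both propositions apply and $\sum_{n\ge N_0}\delta_n<1$, and then enlarging $R$ as above, yields $\PP(\bigcap_{k=1}^\infty\ag_k)=p_\infty\ge1-\sum_{n\ge N_0}\delta_n>0$.

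The delicate point is the order of the two largeness requirements, so as to avoid circularity. The threshold $N_0$ is determined (for a fixed baseline value of $R$) by the convergence of the tail $\sum_{n\ge N_0}\delta_n$ and by the thresholds of the propositions, all of which depend only on $(M_n),(N_n)$ and on $\epsilon$, not on how large $R$ is chosen. Since raising $R$ does not worsen the proposition bounds (larger $R$ only decreases the relevant failure probabilities, so $\delta_n$ is non-increasing in $R$), enlarging $R$ afterwards to force $p_{N_0}=1$ cannot destroy the inequality $\sum_{n\ge N_0}\delta_n<1$. Granting this ordering, the measurability and conditioning step and the summability estimate are entirely routine, and the conclusion \eqref{positiveprobability} follows.
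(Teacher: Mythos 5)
Your proof is correct and follows essentially the same route as the paper: where the paper multiplies conditional probabilities $\PP(\ag_m \mid \bigcap_{k=1}^{m-1}\ag_k)\ge 1-\delta_{m-1}$ into a convergent infinite product, you telescope the decrements $p_n-p_{n+1}\le\delta_n$, but both arguments rest on exactly the same inputs — Propositions \ref{prop:strips} and \ref{prop:lines}, summability of the error terms via $r_n^{-1}\ge 2^n$, and fixing the threshold level first and enlarging $R$ afterwards, which is legitimate precisely because (as the paper also notes) the constants $C, C_2, C_3$ and the thresholds can be chosen independently of $R\ge 1$. Your explicit verification of the base case, namely that $R\ge\sqrt2\,r_{N_0}^{-t}$ forces $p_{N_0}=1$ via the deterministic bounds $|\ell\cap\mathcal F_k|\le\sqrt2\,P_kr_k$ and $Z(S,k)\le P_k$, is a welcome detail that the paper only asserts without proof.
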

\begin{proof}
Choose $N$ so large that  $1-2000 r_{n}^{-3}\exp(-C_3r_n^{-\epsilon})-C r_{n}^{-4}\exp(-C_2r_n^{-\epsilon})$ are positive and the claims of Propositions \ref{prop:strips} and \ref{prop:lines} hold for $n\ge N$. Choosing the constant $R$ in Notation \ref{events} large enough we can make sure that $\PP(\bigcap_{i=1}^{N} \ag_i) >0$. Observe that the choice of the constants $C,C_2,C_3$ can be made independent of $R\ge 1$.
Then, by Propositions \ref{prop:strips} and \ref{prop:lines}, for all $m\geq N+1$,
\begin{align*}
\PP(\ag_m&\mid \bigcap_{k=1}^{m-1}\ag_k)=\frac{ \PP(\ag_m\cap\bigcap_{k=1}^{m-1}\ag_k)}{\PP( \bigcap_{k=1}^{m-1}\ag_k)}\\
&=\frac{1}{\PP( \bigcap_{k=1}^{m-1}\ag_k)}\EE(\chi_{\cap_{k\le m-1}\ag_k}\PP(\ag_m\mid \mathcal F_{m-1}))\\
&\geq \frac{1}{\PP( \bigcap_{k=1}^{m-1}\ag_k)}\EE(\chi_{\cap_{k\le m-1}\ag_k}(1-\PP(A_m^c\mid \mathcal F_{m-1})-\PP(G_m^c\mid \mathcal F_{m-1})))\\
&\ge 1-2000 r_{m}^{-3}\exp(-C_3r_m^{-\epsilon})-C r_{m}^{-4}\exp(-C_2r_m^{-\epsilon}).
\end{align*}
Iterating the observation
\[
\PP(\bigcap_{k=1}^n \ag_k)=\PP(\bigcap_{k=1}^{N}\ag_k)
\prod_{m=N+1}^n\PP(\ag_m\mid \bigcap_{i=1}^{m-1}\ag_i)\]
\[\ge \PP(\bigcap_{k=1}^{N}\ag_k)
\prod_{m=N+1}^n  \bigg(1-2000 r_{m}^{-3}\exp(-C_3r_m^{-\epsilon})- Cr_{m}^{-4}\exp(-C_2r_m^{-\epsilon}) \bigg)
\]
and noticing that the series
\[
\sum_{m=1}^\infty
2000 r_{m}^{-3}\exp(-C_3r_m^{-\epsilon})+C r_{m}^{-4}\exp(-C_2r_m^{-\epsilon})
\]
converges, finishes the proof.
\end{proof}

We finish the proof of Theorem \ref{main} by giving a proof to Lemma \ref{lem:localdim}.

\begin{localdim}
Let $\omega\in\bigcap _{k
=1}^\infty \ag_k$. Fix a line $\ell$, a point $x\in\ell$ and $r>0$. Let $n$ be number with $\tfrac 15 r_{n+1}< 2r\le \tfrac 15 r_n$ and $\ell'$ the line perpendicular to $\ell$ with $\ell\cap \ell'= \{x\}$, and
\[
S=\{y\in\mathbb R^2\mid d(y,\ell')< r\}.
\]
Then $S=\pi_\ell^{-1}(B(x,r))$. Since $\omega\in\ag_{n+1}$, the event $A_{n+1}$ occurs and, in particular, $A(S,n+1)$ occurs as well. Thus, taking \eqref{c_n} into account,
\[
\mu_{n+1}(S)\leq 500\cdot 5^t RP_{n+1}w(S)^tc_{n+1}r_{n+1}^2\leq 500  \cdot5^tRw(S)^t.
\]
For $k\ge n+1$, we have $\mu_k(Q)=\mu_{n+1}(Q)$ for all $Q\in\mathcal{F}_{n+1}$ and  thus
\[
\mu_k(\pi_\ell^{-1}(B(x,r)))\leq 500\cdot 5^t2^tRr^t,
\]
as well. Letting $k\to\infty$, the claim follows.\qed
\end{localdim}

\section{Application to random covering sets}\label{sect:application}

In this section Theorem \ref{main} is applied to the problem of calculating dimensions of projections of random covering sets. The random covering set is defined as a subset of torus, but then projected as a subset of plane. We first recall the definition of random covering sets with generating sequence of ball-like sets. A sequence $(g_n)$ is a sequence of ball-like subsets of the torus $\mathbb{T}^2$ if there exists a sequence of balls $B(x_n,\delta_n)\subset g_n$ such that $\limsup\limits_{n\to\infty}\frac{\rho_n}{\delta_n}<+\infty$,
 where $\rho_n$ is the diameter of $g_n$ and decreases to zero. More generally, we can replace this by the slightly weaker condition
\begin{equation}\label{ball-like}
 \lim_{n\to\infty}\frac{\log\rho_n}{\log\delta_n}=1\,.
\end{equation}
 Let $\xi_n$ be a sequence of independent random variables uniformly distributed on $\mathbb{T}^2$, and denote the induced probability space by $(\Omega,\mathbb P)$. The random covering set is defined as
 \[
 E:=\limsup_{n\to\infty}(\xi_n+g_n).
 \]
As shown in \cite[Proposition 4.7]{JJKLS}, the mass transference principle \cite[Theorem 2]{BeresnevichVelani06}, easily implies that almost surely,
  \[
 \dimH(E)=\inf\{s\ge 0: \sum_{n=1}^\infty \rho_n^s<\infty\}=\limsup_{n\to\infty}\frac{\log n}{-\log\rho_n}:=s_0\,,
 \]
 provided $s_0\le 2$.
 In the above formula, $\rho_n$ can be replaced by $\delta_n$
due to \eqref{ball-like}.

The following theorem is the main result of the paper.
\begin{theorem}\label{randomcoveringproj}
Let $s_0<1$. Then almost surely for every line $\ell$, we have
$$\dimH \pi_\ell(E)=s_0.$$
\end{theorem}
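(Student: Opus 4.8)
The plan is to treat the two bounds separately: the upper bound is immediate, while the lower bound follows by locating inside $E$ a random Cantor set governed by Theorem \ref{main}. For the upper bound, orthogonal projections are $1$-Lipschitz, so $\dimH\pi_\ell(E)\le\dimH(E)$ for every line $\ell$, and almost surely $\dimH(E)=s_0$ by the dimension formula quoted above; hence almost surely $\dimH\pi_\ell(E)\le s_0$ for all $\ell$ simultaneously, and it remains to prove the matching lower bound.

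For the lower bound, fix $\epsilon>0$ and build a nested family of squares inside $E$ realising the model of Section \ref{sect:preliminaries} with parameter $s\ge s_0-\epsilon$. I choose the scales greedily: having fixed the level-$n$ squares of side $r_n$, I use the characterisation $s_0=\limsup_{n\to\infty}\frac{\log n}{-\log\rho_n}$ to select a ``rich'' scale $r_{n+1}\ll r_n$ for which the number of generating sets $g_m$ with $\rho_m\in[r_{n+1},r_n/10]$ is at least $r_{n+1}^{-(s_0-\epsilon)}$; such scales occur for arbitrarily small values, so $r_{n+1}$ may in addition be taken to decay as fast as I wish. Each such $g_m$ contains a ball $B(x_m,\delta_m)$ with $\delta_m=\rho_m^{1+o(1)}$ by \eqref{ball-like}; taking $r_{n+1}$ slightly below the window then guarantees that $\xi_m+g_m$ contains a full $r_{n+1}$-adic square, and I let $g_m$ designate one such square, lying in whichever level-$n$ square $Q$ its centre falls into. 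As the centres $\xi_m$ are independent and uniform, the designated squares are independent and, up to an $O(1/M_{n+1})$ boundary correction, uniform among the $M_{n+1}^2=(r_n/r_{n+1})^2$ subsquares of each $Q$ --- exactly the mechanism of Section \ref{sect:preliminaries} --- with a per-$Q$ count whose mean is $\approx r_n^2\,\#\{m:\rho_m\in[r_{n+1},r_n/10]\}\approx r_{n+1}^{-(s_0-\epsilon)}r_n^2=:N_{n+1}$.

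Two points then require checking. First, the number of designated subsquares in a given $Q$ is binomial with this mean, so a Chernoff bound together with the Borel--Cantelli lemma shows that, because $N_{n+1}$ can be made enormous compared with $\log P_n$ (here the freedom to let $r_{n+1}$ decay arbitrarily fast is essential), almost surely every level-$n$ square eventually produces at least $N_{n+1}$ children; retaining exactly $N_{n+1}$ of them realises the model of Theorem \ref{main}, and the resulting limit set $F$ is contained in $E$, since each point of $F$ lies in the nested squares, each of which sits inside some $\xi_m+g_m$ with $m\to\infty$, and is therefore covered infinitely often. Second, the parameter of this model is $s=\liminf_{n}\frac{\log P_n}{\log r_n^{-1}}$; writing $a_n=\log r_n^{-1}=\sum_{i\le n}\log M_i$ one has $\log N_{n+1}=(s_0-\epsilon)a_{n+1}-2a_n+o(a_{n+1})$, and if the $a_n$ grow fast enough the term $(s_0-\epsilon)a_n$ dominates $\log P_n=\sum_{i\le n}\log N_i$, forcing $s\ge s_0-\epsilon$. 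Note that $N_{n+1}\le M_{n+1}^2$ holds automatically as $s_0-\epsilon<2$, and that $s<1$ since $s_0<1$, so Theorem \ref{main} applies to $F$ and yields, almost surely, $\dimH\pi_\ell(E)\ge\dimH\pi_\ell(F)=s\ge s_0-\epsilon$ for all lines $\ell$; intersecting these almost sure events over $\epsilon=1/k$, $k\in\N$, gives $\dimH\pi_\ell(E)\ge s_0$ for all $\ell$, which combined with the upper bound completes the proof.

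I expect the main obstacle to be the coupling in the second and third paragraphs: reconciling the ball-like geometry of the $g_m$ with the adic grid so that each generating set contributes one independent, essentially uniform subsquare, and simultaneously guaranteeing --- uniformly over all (super-exponentially many) squares and all levels --- that enough children appear to reproduce the model of Theorem \ref{main}. This delicate accounting is precisely where the $\limsup$ defining $s_0$ (through the choice of rich scales) and the unbounded growth of $M_n$ (through the concentration estimate and the computation of $s$) both enter in an essential way.
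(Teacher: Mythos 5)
Your overall strategy is the paper's: embed into $E$ a random Cantor set of the type of Section \ref{sect:preliminaries} and invoke Theorem \ref{main}. But there is a genuine gap at the key probabilistic step. You claim, via Chernoff and Borel--Cantelli, that \emph{almost surely} every level-$n$ square eventually produces at least $N_{n+1}$ children, and you then conclude the almost sure lower bound directly from Theorem \ref{main}. This cannot work: at the very first level there is a fixed positive probability (e.g.\ at least $(1-r_1^2)^{\#\mathrm{window}}>0$) that some parent square receives too few points, so the nested construction fails with positive probability, and no choice of parameters makes the success probability equal to $1$. Borel--Cantelli only gives that all but finitely many levels succeed, which is useless for a nested construction --- once a level fails, the later levels are not even defined --- unless you add a restart/late-start argument, which you do not. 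The paper's proof faces exactly the same obstruction and resolves it differently: it shows $\PP(\Omega_\infty)=\prod_k q_k>0$ with $q_k\ge 1-k^{-2}$, concludes that $\dimH\pi_\ell(E)\ge s_0$ for all $\ell$ holds \emph{with positive probability}, and then upgrades to probability one by observing that this is a tail event of the i.i.d.\ sequence $(\xi_n)$ and applying the Kolmogorov zero-one law. This last step is indispensable and is entirely missing from your argument.

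Two secondary points in your coupling also need repair, and they explain why the paper pre-assigns disjoint blocks of indices $n_{k-1}+(l-1)m_k+1,\dots,n_{k-1}+lm_k$ to each element $\tilde Q_l$ of $\mathcal C_{k-1}$ rather than letting all window points fall where they may. First, elements of $\mathcal F_n$ may repeat (the same square chosen several times), and a parent of multiplicity $\nu$ needs $\nu N_{n+1}$ designated children, while the expected number of window points landing in it is independent of $\nu$; with disjoint index blocks per copy this problem disappears. Second, you set $N_{n+1}$ equal to the \emph{mean} per-parent count, in which case a binomial variable falls below $N_{n+1}$ with probability roughly $1/2$ and no Chernoff bound helps; one must retain only a definite fraction of the mean, as the paper does with $N_k=\lfloor\tfrac12 m_k\mathcal L(\tilde Q_l)\rfloor$. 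Your scale-selection via rich indices of the $\limsup$ and the computation forcing $s\ge s_0-\epsilon$ (the analogue of conditions \eqref{s0}--\eqref{nkcondition2}) are essentially sound, and the $\rho_m$ versus $\delta_m$ mismatch you wave at with \eqref{ball-like} costs only an extra $\epsilon$; but without the zero-one law (or an equivalent tail argument) the proof does not reach the stated almost sure conclusion.
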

  \begin{proof}
 Since the upper bound $\dim_H\pi_\ell(E)\le s_0$ is clear, it suffices to show the almost sure lower bound $\dim_H\pi_\ell(E)\ge s_0$.

For simplicity, we assume that the balls $B(x_n,\delta_n)$ are centred at the origin. This will not cause loss of generality since they are to be translated by uniformly distributed random variables.  Denote by $B_n=\xi_n+B(0,\delta_n)=B(\xi_n,\delta_n)\subset\mathbb{T}^2$ the randomly translated balls
Then $\limsup\limits_{n\to\infty}B_n\subset E$.

Our aim is to construct, with positive probability, a random Cantor-like set $F\subset E$ in order to apply Theorem \ref{main}.

To that end, choose a sequence $\{n_k\}$ of natural numbers such that
\begin{align}\label{s0}
\lim_{k\to\infty}\frac{\log n_k}{-\log \delta_{n_k}}&=\limsup_{n\to\infty}\frac{\log n}{-\log \delta_n}=s_0\,,\\
\label{nkcondition}
n_k&\ge 256\,k^2n_{k-1}^2\delta_{n_{k-1}}^{-2}\,,\\
\label{nkcondition2}
\lim_{k\to\infty}&\frac{\log \delta_{n_{k-1}}}{\log \delta_{n_k}}=0.
\end{align}

We now construct the Cantor-like subset by induction. Our construction is a simplification of the one in \cite{JJKLS}, but we repeat the argument for the convenience of the reader. Let $N_1=\tilde{N}_1=\lfloor \frac{1}{2}n_1\rfloor$ and define $I(1,\mathbb{T}^2)=\{1,\ldots,N_1\}$.
Decompose $\mathbb{T}^2$ (which we identify with the unit square) into $M_1^2$ disjoint subsquares, where $M_1=\lceil 2\delta_{n_1}^{-1}\rceil$, and denote by $\mathcal{D}_1$ the collection of closed $M_1$-adic subsquares of $\mathbb{T}^2$. Here and hereafter, for all $x\in\mathbb R$, the notation $\lfloor x \rfloor$ is used for the largest integer smaller than $x$ and $\lceil x\rceil$ for the smallest integer larger than $x$. For each $i\in I(1,\mathbb{T}^2)$, pick $Q_i\in \mathcal{D}_1$ satisfying
\begin{equation}\label{choiceofcube}
\xi_i\in Q_i\subset B_i,
\end{equation}
Notice that, since $\delta_i$ are in decreasing order and the sidelength of the cubes satisfies $M_1^{-1}\le \frac{\delta_{n_1}}{2}$, such a choice is possible. In case there are more than one cubes satisfying \eqref{choiceofcube}, that is, $\xi_i$ is on the boundary of a cube, just choose any of them (Since the boundaries of the cubes are of zero Lebesgue measure, the event that any of the $\xi_i$ lies on the boundary of a cube has zero probability anyway). Denote the collection of such $Q_i$'s by $$\mathcal{C}_1=\{Q_i\in \mathcal{D}_1:\ i\in I(1,\mathbb{T}^2)\}.$$
For completeness (see the definition of $\Omega_k$ and $q_k$ for $k>1$ below), we define $\Omega_1=\Omega$ and $q_1=\mathbb{P}(\Omega_1)=1$.

We continue the random construction inductively as follows: Assume that numbers $M_1,\dots, M_{k-1}$ and $\tilde{N}_1,\dots, \tilde{N}_{k-1}$ have been defined and suppose that for all $i=1,\dots, k-1$ we are given a collection $\mathcal{C}_i$ of $\tilde{N}_i$ squares in $\mathcal D_i$, where $\mathcal{D}_i$ is the family of $(\prod_{j=1}^i M_j)$-adic subsquares of $\mathbb T^2$.

Denote
 $\mathcal{C}_{k-1}=\{\tilde{Q}_1,\cdots,\tilde{Q}_{\tilde{N}_{k-1}}\}$. Let $m_k=\lfloor \frac{n_k-n_{k-1}}{\tilde{N}_{k-1}} \rfloor$. For any $1\le l\le \tilde{N}_{k-1}$, consider the random family of indices consisting of those $n_{k-1}+(l-1)m_k+1\le j\le n_{k-1}+lm_k$, for which $\xi_j\in\tilde{Q}_l$. If there are at least $\lfloor \frac{1}{2}m_k\mathcal{L}(\tilde{Q}_l)\rfloor:=N_k$ such indices $j$, let $I(k,\tilde{Q}_l)$ denote the first $N_k$ of them. Let $\Omega_k$ be the event that $I(k,\tilde{Q}_l)$ is well defined for all $\tilde{Q}_l\in\mathcal{C}_{k-1}$.

Conditioning on $\Omega_k$, decompose every $Q\in\mathcal{D}_{k-1}$ to $M_k^2$ disjoint subsquares, where $M_k=\lceil 2\delta_{n_k}^{-1}(\Pi_{j=1}^{k-1}M_j)^{-1}\rceil$,  and denote by $\mathcal{D}_k$ the collection of closed $(\Pi_{j=1}^kM_j)$-adic subsquares of $\mathbb{T}^2$. For any $i\in I(k,\tilde{Q}_l)$, pick $Q_i\in \mathcal{D}_k$ satisfying
\begin{equation*}
\xi_i\in Q_i\subset B_i.
\end{equation*}
Since the sidelength of these squares is $(\Pi_{j=1}^kM_j)^{-1}\le \frac{\delta_{n_k}}{2}$, one can always find such $Q_i$, and it is unique outside a zero measure set of $\xi_i$:s.
 Set
$$\mathcal{C}_k=\{Q_i\in\mathcal{D}_k: Q\in\mathcal{C}_{k-1}, i\in I(k,Q)\}$$
and $\tilde{N}_k:=\#\mathcal{C}_k=\tilde{N}_{k-1}N_k$.

For $\omega\in\Omega_\infty=\cap_{k\ge 1}\Omega_k$, consider the random Cantor set $F=F(\omega)$ defined as
$$F=\bigcap_{k=1}^\infty\bigcup_{Q\in\mathcal{C}_k}Q\subset\limsup_{n\to\infty}B_n\subset E\,.$$

We next check that $\mathbb{P}(\Omega_\infty)>0$. Using \cite[Proposition 2.6]{JJKLS} and the construction of $\mathcal{C}_k$, we have the following lower bound for the conditional probabilities $q_k:=\mathbb{P}(\Omega_k|\Omega_1,\dots,\Omega_{k-1})$,
$$q_k\geq 1-\tilde{N}_{k-1}\frac{4(1-\mathcal{L}(Q))}{m_k\mathcal{L}(Q)}\ge 1-\frac{4\tilde{N}_{k-1}(\Pi_{j=1}^{k-1}M_j)^2}{m_k}$$
where $Q\in \mathcal{C}_{k-1}$.
Note that since
\begin{equation}\label{mkestimate}
m_k\ge \frac{n_k-n_{k-1}}{2\tilde{N}_{k-1}}\ge \frac{n_k}{4\tilde{N}_{k-1}}
\end{equation}
 and $\Pi_{j=1}^{k-1}M_j\le 4\delta_{n_{k-1}}^{-1}$,
 we have
$$q_k\ge 1-\frac{256\tilde{N}_{k-1}^2\delta_{n_{k-1}}^{-2}}{n_k}\ge 1-\frac{256 n_{k-1}^2\delta_{n_{k-1}}^{-2}}{n_k}\ge 1-\frac{1}{k^2},$$
where the second inequality follows because $\tilde{N}_{k-1}\le n_{k-1}$ and the third one is due to \eqref{nkcondition}. Therefore, $\mathbb{P}(\Omega_\infty)=\Pi_{k=1}^\infty q_k>0.$

Now $F$ is a random Cantor set as defined in Section \ref{sect:preliminaries} with the defining sequences $(M_n)$ and $(N_n)$. Thus,
we can apply Theorem  \ref{main} to obtain with positive probability for all lines $\ell\subset\mathbb{R}^2$,
$$\dim_H\pi_\ell(E)\geq\dimH\pi_\ell(F)
=\liminf_{k\to\infty}\frac{\sum_{j=1}^k\log N_j}{\sum_{j=1}^k\log M_j}\,.
$$
Since
$\prod_{j=1}^kM_j\le 4\delta_{n_k}^{-1}$ and using \eqref{mkestimate}
$$N_k\ge\frac{m_k}{4\prod_{j=1}^{k-1}M_{k}^2}\ge\frac{n_k\delta_{n_{k-1}}^2}{16 \tilde{N}_{k-1}}\ge\frac{n_k\delta_{n_{k-1}}^3}{16}\,,$$
recalling \eqref{nkcondition2} we have,
\[\frac{\sum_{j=1}^k\log N_j}{\sum_{j=1}^k\log M_j}\ge \lim_{k\to\infty}\frac{\log n_k}{-\log \delta_{n_k}}=s_0\,.\]
Finally, since
\[\dim_H\pi_\ell(E)\geq s_0\text{ for all lines }\ell\]
is a tail event of positive probability, the Kolmogorov zero-one law implies that it must have full probability.
\end{proof}
\begin{remark}
In \cite{JJKLS} and \cite{Persson}, dimension formulas are obtained for certain affine type random covering sets. For these classes of random covering sets
there is no similar result to Theorem \ref{randomcoveringproj}. Here is a counterexample:

 Let $(g_n)$ be a sequence of rectangles in $\mathbb{T}^2$ with the sidelength of the sides parallel to the $x$-axis $\alpha_1(g_n)=n^{-\alpha}$ and of those parallel to the $y$-axis $\alpha_2(g_n)=n^{-\beta}$, where $\beta>\alpha>1$. Then the Hausdorff dimension $\dim_H(E)=\alpha^{-1}$ almost surely. However, for the projection to the $y$-axis, $\pi_y$, instead $\dim_H(\pi_y(E))=\beta^{-1}$.
 \end{remark}

\section{Generalizations}\label{sect:general}
For simplicity, in above we have considered the case $d=2$ only.
In a similar way, one can define the random Cantor sets $F$ with defining sequences $(M_n)$, $(N_n)$, $N_n\le M_n^d$ and almost sure Hausdorff dimension
\[s=\liminf\limits _{n\to\infty} \frac{\sum_{i=1}^n\log N_i}{\sum_{i=1}^n\log M_i}\,\]
also on the higher dimensional torus $\mathbb{T}^d$ for any $d\ge3$. Embedding $\mathbb{T}^d$ in a natural way into the unit cube of $\mathbb{R}^d$ one can then consider orthogonal projections of $F$ into $k$-dimensional linear subspaces of $\mathbb{R}^d$. Let $\mathbb{G}_{d,k}$ denote the family of all $k$-dimensional linear subspaces of $\mathbb{R}^d$ and for $V\in\mathbb{G}_{d,k}$, let $\pi_V$ denote the orthogonal projection onto $V$. The proofs from Sections \ref{sect:preliminaries}--\ref{sect:probability} can be easily modified to this setting in order to obtain the following generalisation of Theorem \ref{main}.

\begin{theorem}\label{general}
If $s<k\le d$, then almost surely for every $V\in\mathbb{G}_{d,k}$, we have $\dimH\pi_V(F)=s$.
\end{theorem}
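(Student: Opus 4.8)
The plan is to reduce Theorem \ref{general} to the two-dimensional argument already carried out in Sections \ref{sect:preliminaries}--\ref{sect:probability}, by re-running the same probabilistic scheme with strips replaced by slabs. Concretely, for $V\in\mathbb{G}_{d,k}$ the relevant object is the orthogonal projection $\pi_V$ onto a $k$-plane, whose fibres $\pi_V^{-1}(B(x,r))$ are tubular neighbourhoods of the $(d-k)$-dimensional orthogonal complement $V^\perp$. So throughout I would replace the notion of a \emph{strip} (a $\delta$-neighbourhood of a line in $[0,1]^2$) by that of a \emph{slab}: a $\delta$-neighbourhood of an affine $(d-k)$-plane inside $[0,1]^d$. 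The whole architecture of the proof --- the events $A(S,n)$, $G(\ell,n)$, the conditional exponential (Bernstein--Markov) estimates of Lemmata \ref{lem:inside level} and \ref{lemma:probab_estimate}, the finite-approximation argument of Lemma \ref{lem:finite_approx_family}, and the Borel--Cantelli-type product bound of the final theorem of Section \ref{sect:probability} --- is insensitive to dimension and should transfer essentially verbatim once the geometric inputs are restated.

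First I would fix the target exponent $t<s$ and set up the analogues of the geometric lemmata of Section \ref{sec:geom}. The counting estimate of Lemma \ref{lem:lengthtonum}, relating the number of level-$n$ cubes meeting a thin strip to a length bound along a transverse line, becomes a statement relating $Z(\text{slab},n)$ to the $(d-k)$-dimensional $\mathcal H^{d-k}$-content of the slab's intersection with the construction cubes; the same Fubini computation applies, now integrating $\pi_{V}^{-1}$-fibres against $\mu_n$. The finite-family lemma (Lemma \ref{lem:finite_approx_family}) must be upgraded: I need a finite collection $\mathcal D$ of slabs, of cardinality polynomial in $M$, so that every slab of width between $M^{-1}$ and $1$ is contained in a comparable member of $\mathcal D$. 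This is produced exactly as before by taking finitely many representative $(d-k)$-planes through a grid of boundary points of $[0,1]^d$ and finitely many widths; the cardinality becomes a dimension-dependent polynomial in $M$, say $C_d M^{a(d)}$, which is all the final product estimate needs. The counting of transverse planes in Lemma \ref{lem:finitenumblines} is upgraded similarly, with $r_n^{-4}$ replaced by a polynomial $r_n^{-b(d)}$.

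With these geometric ingredients in place, the probabilistic heart goes through unchanged in form. Conditional on $\mathcal F_n$, the indicators $X_{i,j}=\chi\{U_j^i\cap S\neq\emptyset\}$ for a slab $S$ are still independent Bernoulli variables with success probability controlled by the number of transverse subcubes hitting $S$, and the estimate $m_i M_{n+1}^{-d}\lesssim_d w(S)\, r_n^{-1}$ replaces the planar bound $m_iM_{n+1}^{-2}\le 5\sqrt2\,\delta r_n^{-1}$ (the implied constant now depending on $d$ and $k$). The Markov--exponential bounds of Lemmata \ref{lem:inside level} and \ref{lemma:probab_estimate} then reproduce verbatim, and Propositions \ref{prop:strips} and \ref{prop:lines} yield conditional probabilities $\PP(A_{n+1}^c\mid\mathcal F_n)$ and $\PP(G_{n+1}^c\mid\mathcal F_n)$ bounded by $(\text{poly in } r_{n+1}^{-1})\exp(-C r_{n+1}^{-\epsilon})$; since the polynomial degree is harmless against the stretched-exponential decay, the series still converges and $\PP(\bigcap_n\ag_n)>0$ follows exactly as in the final theorem of Section \ref{sect:probability}. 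The local-dimension conclusion then reads $(\pi_V)_*\mu(B(x,r))\le C r^t$ for all $V$, $x$, $r$, giving $\dimH\pi_V(F)\ge t$ for all $V$ simultaneously with positive probability, and the Kolmogorov zero-one law upgrades this to almost sure; letting $t\uparrow s$ finishes the lower bound, while the upper bound $\dimH\pi_V(F)\le\min\{s,k\}=s$ (using $s<k$) is immediate.

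The step I expect to be the genuine obstacle is the correct formulation and verification of the slab version of Lemma \ref{lem:finite_approx_family}. In two dimensions a line is pinned down by two boundary points and widths come in a single scale parameter, so the bookkeeping is transparent; for $(d-k)$-planes in $\mathbb R^d$ one must parametrise the relevant directions and offsets finely enough that every slab is captured by a comparable representative, yet coarsely enough that the count stays polynomial in $M$ --- and one must check that the covering relation $S\subset\widetilde S$ with $w(\widetilde S)\le c_d\, w(S)$ genuinely holds for \emph{all} slabs, including those whose defining plane is nearly parallel to the grid. Getting the uniformity over the Grassmannian $\mathbb G_{d,k}$ right, rather than over the single circle of directions available when $d=2$, is the one place where a careful (though still elementary) geometric argument is required; everything else is a routine transcription with dimension-dependent constants.
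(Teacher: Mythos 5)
Your proposal is correct and takes essentially the same approach as the paper: the paper gives no detailed proof of Theorem \ref{general}, stating only that the arguments of Sections \ref{sect:preliminaries}--\ref{sect:probability} ``can be easily modified'' to the higher-dimensional setting, and your strip-to-slab transcription (with $(d-k)$-planes replacing lines, polynomial-in-$r_n^{-1}$ approximating families, and the unchanged exponential/Markov machinery) is precisely that modification, carried out in more detail than the paper itself provides.
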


As in case $d=2$, this result can then be applied for the random covering sets of the torus in order to prove

\begin{theorem}
Let $E\subset\mathbb{T}^d$ be a ball-like random covering set with almost sure Hausdorff dimension $s_0< k$. Then almost surely for all $V\in\mathbb{G}_{d,k}$, $\dimH\pi_V(E)=s_0$.
\end{theorem}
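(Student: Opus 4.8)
The plan is to imitate the proof of Theorem \ref{randomcoveringproj}, replacing the planar construction by its $d$-dimensional analogue and invoking Theorem \ref{general} in place of Theorem \ref{main}. For every $V\in\mathbb{G}_{d,k}$ the upper bound $\dimH\pi_V(E)\le\dimH(E)=s_0$ is immediate, so only the almost sure lower bound requires proof. As before I would assume the balls $B(x_n,\delta_n)$ are centred at the origin, set $B_n=B(\xi_n,\delta_n)$ so that $\limsup_n B_n\subset E$, and aim to build, with positive probability, a random Cantor set $F\subset\limsup_n B_n$ of the type described in Section \ref{sect:general}, with defining sequences $(M_n),(N_n)$ obeying $N_n\le M_n^d$.

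First I would select a subsequence $(n_k)$ so that $\log n_k/(-\log\delta_{n_k})\to s_0$ together with the separation and slow-variation conditions that are the $d$-dimensional counterparts of \eqref{nkcondition} and \eqref{nkcondition2}; the only change is that the factor $\delta_{n_{k-1}}^{-2}$ in \eqref{nkcondition} becomes $\delta_{n_{k-1}}^{-d}$, reflecting that a $(\prod_{j\le k-1}M_j)$-adic cube now has Lebesgue measure $(\prod_{j\le k-1}M_j)^{-d}$. I would then choose $M_k=\lceil 2\sqrt{d}\,\delta_{n_k}^{-1}(\prod_{j<k}M_j)^{-1}\rceil$, which ensures that each level-$k$ cube has main diagonal at most $2\delta_{n_k}$ and hence fits inside a ball $B_i$ of radius $\delta_{n_k}$; here the factor $\sqrt d$ (the ratio of the cube diagonal to its side) replaces the planar $\sqrt 2$.

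With these choices the inductive construction of the collections $\mathcal{C}_k$ and the events $\Omega_k$ goes through verbatim, and the lower bound for the conditional probabilities $q_k=\PP(\Omega_k\mid\Omega_1,\dots,\Omega_{k-1})$ coming from \cite[Proposition 2.6]{JJKLS} yields, after substituting $\mathcal{L}(Q)=(\prod_{j<k}M_j)^{-d}$ and $m_k\ge n_k/(4\tilde N_{k-1})$, an estimate $q_k\ge 1-Ck^{-2}$; the separation condition then gives $\PP(\Omega_\infty)=\prod_k q_k>0$. On $\Omega_\infty$ the set $F=\bigcap_k\bigcup_{Q\in\mathcal{C}_k}Q$ is a random Cantor set of the required form, with relevant exponent $s=\liminf_k\frac{\sum_{j\le k}\log N_j}{\sum_{j\le k}\log M_j}$. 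The same computation as in Theorem \ref{randomcoveringproj}, using $\prod_{j\le k}M_j\le 4\sqrt d\,\delta_{n_k}^{-1}$ and the lower bound on $N_k$ supplied by \eqref{mkestimate} together with \eqref{nkcondition2}, shows $s=s_0$.

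Since $s=s_0<k$ by hypothesis, Theorem \ref{general} applies: conditionally on $\Omega_\infty$, almost surely $\dimH\pi_V(F)=s_0$ for all $V\in\mathbb{G}_{d,k}$ simultaneously, whence $\dimH\pi_V(E)\ge\dimH\pi_V(F)=s_0$ for all $V$ with positive probability. Finally, the event ``$\dimH\pi_V(E)\ge s_0$ for all $V\in\mathbb{G}_{d,k}$'' is a tail event, so Kolmogorov's zero--one law upgrades positive probability to full probability, which combined with the upper bound completes the proof. I expect no genuine obstacle here: the entire difficulty is already contained in Theorem \ref{general}, and the present argument is a routine dimension-bookkeeping adaptation of the $d=2$ case. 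The only points demanding mild care are the dimension-dependent exponents in the estimate $\PP(\Omega_\infty)>0$ and the $\sqrt d$ factor ensuring that the construction cubes fit inside the covering balls.
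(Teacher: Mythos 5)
Your proposal is correct and follows exactly the route the paper intends: the paper itself gives no details for this theorem beyond the remark that the $d=2$ argument of Theorem \ref{randomcoveringproj} adapts with Theorem \ref{general} in place of Theorem \ref{main}, which is precisely the adaptation you carry out (dimension-dependent exponents in the probability estimate, the $\sqrt d$ factor in the choice of $M_k$, and the zero--one law at the end). One cosmetic slip: what your choice of $M_k$ actually guarantees is a cube diagonal at most $\delta_{n_k}/2$ (not merely $2\delta_{n_k}$), and it is this stronger bound that makes $Q_i\subset B(\xi_i,\delta_i)$ possible; the construction itself is fine.
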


\bibliography{randomcoveringCKLS}
\bibliographystyle{abbrv}
\end{document}